\documentclass[10pt,leqno]{article} %%%%leqno es para numeraci\'{o}n de
   %%%ecuaciones a la izquierda.

   %%%y que salga en el texto latex.
   \usepackage[centertags]{amsmath}
   \usepackage{amsfonts}
   \usepackage{amsmath}
   \usepackage{amssymb}
   \usepackage{amsthm}
   \usepackage{newlfont}
   \usepackage[latin1]{inputenc}%%%%para que reconozca las tildes.
 %  \usepackage[notcite,notref]{showkeys}

%%% \allowdisplaybreaks es una orden de amsmath que permite cortes de
%%% página dentro de una fórmula de varias líneas (align, multline, gather, ...)
\allowdisplaybreaks[3]
%%% Escala de 1 (permite algo) a 4 (ninguna traba para cortar).
%%% Si en algún sitio queremos impedir un posible corte, usar \\* en vez de \\

\theoremstyle{plain}
\newtheorem{theorem}{Theorem}[section]
\newtheorem{lemma}[theorem]{Lemma}

\theoremstyle{definition}
\newtheorem{definition}[theorem]{Definition}

\theoremstyle{remark}

\newtheorem*{remark*}{Remark}

\numberwithin{equation}{section}

\newcommand\D{{\mathcal D}}
\newcommand\A{{\mathcal A}}

\newcommand\cR{{\mathcal R}}

\newcommand\RR{{\mathbb R}}

\newcommand\NN{{\mathbb N}}
\newcommand\PP{{\mathbb P}}

\newcommand\awr{\operatorname{\mbox{$\alpha$}-wr}}

\newcommand\rank{\operatorname{rank}}

\newcommand\diagonal{\operatorname{diag}}

   \parindent 10pt %%%%No escribe un poquito a la derecha en nuevos p\'{a}rrafos.

   \title{Differential equations for discrete Laguerre-Sobolev orthogonal polynomials
  \footnote{Partially supported by MTM2012-36732-C03-03 (Ministerio de Economía y Competitividad),
FQM-262, FQM-4643, FQM-7276 (Junta de Andalucía) and Feder Funds (European
Union). The second author is also grateful to FOS studio in Panam\'a, for letting him use their facilities during the summer of 2013.}}
   \author{Antonio J. Durán and Manuel D. de la Iglesia$^{\dagger}$ \\
   \footnotesize $\dagger$  \footnotesize
    \  Departamento de An\'{a}lisis Matem\'{a}tico.
   Universidad de Sevilla \\
   \footnotesize Apdo (P. O. BOX) 1160. 41080 Sevilla. Spain.
   duran@us.es, mdi29@us.es \\}
  % \date{{\normalsize \emph{Dedicated to Dick Askey on the occasion of his 80th birthday}}}

\date{}

   \begin{document}
   \maketitle

   \begin{abstract}
The aim of this paper is to study differential properties of orthogonal polynomials with respect to a discrete Laguerre-Sobolev bilinear form
with mass point at zero. In particular we construct the orthogonal polynomials using certain Casorati determinants. Using this construction, we prove that they are eigenfunctions of a differential operator (which will be explicitly constructed). Moreover, the order of this differential operator is explicitly computed in terms of the matrix which defines the discrete Laguerre-Sobolev bilinear form.
\end{abstract}

\section{Introduction and results}
The  issue of orthogonal polynomials (with respect to a positive
measure) which are also common eigenfunctions of a second order
differential operator goes back at least for two centuries, when
Legendre introduced the first family of what we call today
classical orthogonal polynomials. As S. Bochner established in
1929 \cite{B}, there are only three families of classical
orthogonal polynomials: Hermite, Laguerre and Jacobi (and Bessel
polynomials if signed measures are considered).

H.L. Krall raised in 1939 (\cite{Kr1,Kr2}) the problem of finding
orthogonal polynomials which are also common eigenfunctions of a
higher order differential operator with polynomial coefficients.
He obtained a complete classification for the case of a
differential operator of order four (\cite{Kr2}). Besides the
classical families of Hermite, Laguerre and Jacobi (satisfying
second order differential equations), he  found three other
families of orthogonal polynomials which are also eigenfunctions of a fourth order
differential operator. One of them is orthogonal with
respect to a positive measure which consists of a Laguerre weight
together with a Dirac delta at the end point of its interval of
orthogonality: $e^{-x}+M_0\delta _0$.

Forty years later, L.L. Littlejohn (\cite{L1,L2}) discovered new
families satisfying sixth and eighth order differential equations, respectively. They are
orthogonal with respect to
\begin{equation}\label{krr1}
x^\alpha e^{-x}+M_0\delta _0,\quad x>0,
\end{equation}
with $\alpha =1, 2$, respectively. The general result for $\alpha$ a nonnegative
integer was proved by J. Koekoek and R. Koekoek who showed in 1991
that orthogonal polynomials with respect to \eqref{krr1} are also eigenfunctions
of an infinite order differential operator, except for nonnegative
integer values of $\alpha$ for which the order reduces to $2\alpha
+ 4$ (\cite{koekoe}).

Some years later discrete Laguerre-Sobolev orthogonal polynomials
which are also common eigenfunctions of a higher order
differential operator entered into the picture. R. Koekoek and H.
G. Meijer \cite{KM} introduced orthogonal polynomials with respect
to the discrete Laguerre-Sobolev inner product
$$
\langle p,q\rangle=\int_0^\infty
p(x)q(x)x^{\alpha}e^{-x}dx+M_0p(0)q(0)+M_1p'(0)q'(0),\quad M_0\ge 0, M_1>0,
$$
and later on R. Koekoek \cite{koea,koeb} found that for  $\alpha
=0,1,2$, those orthogonal polynomials are also eigenfunctions of a
differential operator with polynomial coefficients of order
$2\alpha +8$ when $M_0=0$ and $4\alpha +10$ for $M_0>0$. This result
was soon extended for nonnegative integers $\alpha $ by J.
Koekoek, R. Koekoek and H. Bavinck \cite{KKB}. Using a different
approach, P. Iliev \cite{I} has recently extended these results
for a Laguerre-Sobolev inner product of the form
$$
\langle p,q\rangle=\int_0^\infty p(x)q(x)x^{\alpha-2}e^{-x}dx+(p(0),p'(0))\begin{pmatrix}
M_{0,0}&M_{0,1}\\M_{0,1}&M_{1,1}\end{pmatrix}
\begin{pmatrix} q(0)\\q'(0) \end{pmatrix}.
$$
(For other related papers see \cite{JKLL1} and \cite{JKLL2}).

\medskip

For $\alpha \not =-1,-2,\ldots,$  denote $\mu_{\alpha}(x)$ the orthogonalizing weight for the
Laguerre polynomials. Only when $\alpha >-1$, $\mu_{\alpha }(x)$, $x>0$, is positive, and then
\begin{equation}\label{wL}
\mu_\alpha(x) =x^\alpha e^{-x},\quad x>0.
\end{equation}
Let $M$ be a $m\times m$ matrix. The purpose of this paper is to prove in a constructive way that
if $\alpha$ and $m$ are positive integers with $\alpha \ge m$, then
the orthogonal polynomials with respect to a discrete Laguerre-Sobolev bilinear form
$$
\langle p,q\rangle=\int_0^\infty
p(x)q(x)\mu_{\alpha-m}(x)dx+(p(0),\ldots,
p^{(m-1)}(0))M\begin{pmatrix} q(0)\\\vdots \\q^{(m-1)}(0)
\end{pmatrix},
$$
are eigenfunctions of a differential operator with polynomial coefficients.

To do that we introduce  the functions $\mathcal R_l$, $l=1,\ldots
, m$, defined by
\begin{equation}\label{iRdSip}
\mathcal R_l(x)=\frac{\Gamma (\alpha
-m+l)}{(m-l)!}(x+1)_{m-l}+(l-1)!\frac{\Gamma(\alpha
+1+x)}{\Gamma(1+x)}\sum_{i=0}^{m-1}\frac{(-1)^iM_{l-1,i}}{\Gamma(\alpha
+i+1)}(x-i+1)_i,
\end{equation}
where $(x)_n$ denotes the Pochhammer symbol.

Using a general result for discrete Sobolev bilinear forms
(Theorem \ref{LemLagSob} in the Preliminaries), we first
characterize the existence of (left) orthogonal polynomials with
respect to the Laguerre-Sobolev bilinear form above using the
Casorati determinant defined by the functions $\mathcal R_l$,
$l=1,\ldots , m$. Moreover, we find a close expression for these
orthogonal polynomials in terms of the Laguerre polynomials
$(L_n^\alpha)_n$ and the functions $\mathcal R_l$.

\begin{theorem}\label{mainth1}
For $m\geq1$, let $M=(M_{i,j})_{i,j=0}^{m-1}$ be a $m\times m$
matrix. For $\alpha \not =m-1, m-2,\ldots $, consider the discrete
Laguerre-Sobolev bilinear form defined by
\begin{equation}\label{idslip}
\langle p,q\rangle=\int_0^\infty
p(x)q(x)\mu_{\alpha-m}(x)dx+\mathbb{P}(0)M\mathbb{Q}(0)^T,
\end{equation}
where $\mathbb{P}(0)=\left(p(0),p'(0),\ldots,p^{(m-1)}(0)\right)$
and $\mu_{\alpha}$ is the orthogonalizing weight for the Laguerre polynomials $(L_n^\alpha )_n$. Then the following conditions are equivalent
\begin{enumerate}
\item The discrete Laguerre-Sobolev bilinear form (\ref{idslip}) has a sequence $(q_n)_n$ of (left) orthogonal polynomials.
\item The $m\times m$ Casorati determinant
\begin{equation}\label{casdet}
\Omega (n)=\det (\mathcal R_i(n-j))_{i,j=1}^m,
\end{equation}
where $\mathcal R_l, l=1,\ldots,m,$ are defined by \eqref{iRdSip}, does not vanish for $n\geq0$.
\end{enumerate}
Moreover, if one of these properties holds, the polynomials defined
by
\begin{equation}\label{iquss}
q_n(x)=\begin{vmatrix}
               L_n^\alpha (x) & L^\alpha_{n-1}(x) & \cdots & L^\alpha_{n-m}(x) \\
\mathcal R_1(n) &\mathcal R_1(n-1) & \cdots & \mathcal R_1(n-m) \\
               \vdots & \vdots & \ddots & \vdots \\
               \mathcal R_m(n) &  \mathcal R_m(n-1) & \cdots & \mathcal R_m(n-m)
             \end{vmatrix},\quad n\geq0,
\end{equation}
are orthogonal with respect to (\ref{idslip}) (as usual for $n<0$ we take $L_n^\alpha =0$).
\end{theorem}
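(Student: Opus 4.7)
The plan is to reduce the statement to the general theorem on discrete Sobolev bilinear forms (Theorem~\ref{LemLagSob}) and then verify that the auxiliary functions it produces are precisely the $\mathcal{R}_l$ of \eqref{iRdSip}. The integral part of \eqref{idslip} uses the weight $\mu_{\alpha-m}$, whose orthogonal polynomials are $(L_n^{\alpha-m})_n$, while the determinantal ansatz \eqref{iquss} is built from $(L_n^\alpha)_n$; this is consistent because the standard connection formula between Laguerre polynomials with shifted parameters gives $\mathrm{span}\{L_n^\alpha,\ldots,L_{n-m}^\alpha\}=\mathrm{span}\{L_n^{\alpha-m},\ldots,L_{n-m}^{\alpha-m}\}$, so that searching for $q_n$ in the former span is an invertible reparametrization of the ``correct'' $(m+1)$-dimensional subspace one would a priori guess from plain Laguerre orthogonality.

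With this observation, Theorem~\ref{LemLagSob} applies directly: it produces the (left) orthogonal polynomials as a determinant of the shape \eqref{iquss}, with rows $(\Phi_l(n),\Phi_l(n-1),\ldots,\Phi_l(n-m))$ for $l=1,\ldots,m$, where each $\Phi_l(k)$ is the value on $L_k^\alpha$ of the $l$-th Sobolev \emph{linking functional}. Orthogonality of the resulting determinant then reduces to the cofactor-expansion identity $\sum_{j=0}^m(-1)^j\Phi_l(n-j)\cdot\mathrm{Minor}_j=0$ (vanishing because the augmented matrix has two equal rows), and existence reduces to the non-vanishing of the $m\times m$ Casorati determinant of the $\Phi_l$. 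So the entire proof reduces to the identification $\Phi_l=\mathcal{R}_l$.

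To carry out that identification, one decomposes $\mathcal R_l(k)$ into its two summands and matches each separately. The first summand, $\frac{\Gamma(\alpha-m+l)}{(m-l)!}(k+1)_{m-l}$, should be $\int_0^\infty L_k^\alpha(x)\,x^{l-1}\mu_{\alpha-m}(x)\,dx$; this integral is computable either by expanding $L_k^\alpha$ in its explicit hypergeometric series and collapsing a Chu--Vandermonde sum, or by applying the Rodrigues formula $L_k^\alpha(x)=\frac{x^{-\alpha}e^x}{k!}\frac{d^k}{dx^k}(x^{k+\alpha}e^{-x})$ followed by repeated integration by parts (tracking the boundary terms at $0$ carefully, since $x^{l-1-m}$ is singular there for $l\le m$). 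The second summand, once one substitutes $(L_k^\alpha)^{(i)}(0)=(-1)^i\binom{k+\alpha}{k-i}$, is exactly $(l-1)!\sum_{i=0}^{m-1}M_{l-1,i}(L_k^\alpha)^{(i)}(0)$; this is precisely the mass-point contribution to the Sobolev pairing between the test direction $x^{l-1}$ and $L_k^\alpha$, the prefactor $(l-1)!$ arising from $(x^{l-1})^{(l-1)}(0)=(l-1)!$.

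The main obstacle is the clean evaluation of the integral in the first summand: since $\mu_{\alpha-m}\ne\mu_\alpha$, standard Laguerre orthogonality is not directly available, and one must either sum a hypergeometric series in closed form or handle boundary contributions in the Rodrigues-formula integration by parts. Once the compact expression $\frac{\Gamma(\alpha-m+l)}{(m-l)!}(k+1)_{m-l}$ is obtained, the identification $\Phi_l=\mathcal R_l$ is immediate and both the equivalence (1)$\Leftrightarrow$(2) and the explicit formula \eqref{iquss} transfer mechanically from Theorem~\ref{LemLagSob}.
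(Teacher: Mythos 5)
Your overall plan is the same as the paper's: reduce everything to Lemma~\ref{LemLagSob} and identify its auxiliary sequences $R_l$ with the functions $\mathcal R_l$ of \eqref{iRdSip}; and your two matching computations (the moment integral for the first summand, and the mass-point sum via $(L_k^\alpha)^{(i)}(0)=(-1)^i\binom{k+\alpha}{k-i}$ for the second) are correct in content. The genuine gap is the justification you give for the central structural step, namely why the determinant \eqref{iquss} is built from $(L_n^\alpha)_n$. Your claimed equality $\mathrm{span}\{L_n^\alpha,\ldots,L_{n-m}^\alpha\}=\mathrm{span}\{L_n^{\alpha-m},\ldots,L_{n-m}^{\alpha-m}\}$ is false for $n>m$: by \eqref{Lagab}, $L_k^{\alpha-m}=\sum_{j=0}^{m}\frac{(-m)_j}{j!}L_{k-j}^\alpha$, so the right-hand span sits inside $\mathrm{span}\{L_n^\alpha,\ldots,L_{n-2m}^\alpha\}$, a window of $2m+1$ consecutive polynomials, not $m+1$; concretely, for $m=1$, $n=2$, one has $L_1^{\alpha-1}=L_1^\alpha-L_0^\alpha\notin\mathrm{span}\{L_2^\alpha,L_1^\alpha\}$. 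The premise behind your ``reparametrization'' is also wrong: the Sobolev polynomial $q_n$ is \emph{not} a combination of $m+1$ consecutive $L_k^{\alpha-m}$, since for every $k<n$ left-orthogonality $\langle q_n,L_k^{\alpha-m}\rangle=0$ forces $\int q_nL_k^{\alpha-m}\,d\mu_{\alpha-m}$ to equal minus the mass-point term, which is generically nonzero. So the two subspaces are genuinely different, and deciding which one contains $q_n$ is precisely what the lemma settles; it cannot be waved away as a change of basis.

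The fix is one line, and it is how Lemma~\ref{LemLagSob} is designed to be used: the lemma is stated for the orthogonal polynomials of the \emph{modified} measure $(x-\lambda)^m\nu$ (the paper flags exactly this as the novelty of formula \eqref{quss}). With $\lambda=0$ and $\nu=\mu_{\alpha-m}$ one has $x^m\nu=\mu_\alpha$, hence $p_n=L_n^\alpha$ on the nose, and no span argument is needed; after that, your identification $\Phi_l=\mathcal R_l$ goes through. A secondary caution on your ``main obstacle'': evaluating $\int_0^\infty L_k^\alpha(x)x^{l-1}\mu_{\alpha-m}(x)\,dx$ by the hypergeometric series or by Rodrigues' formula requires $\alpha>m-1$ for convergence at $0$, while the theorem only assumes $\alpha\neq m-1,m-2,\ldots$, so $\mu_{\alpha-m}$ may be only a (quasi-definite) moment functional. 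The paper's evaluation avoids this and is uniform in $\alpha$: since $x^{l-1}\mu_{\alpha-m}=\mu_{\alpha-m+l-1}$, expand $L_k^\alpha$ by \eqref{Lagab} with $\beta=\alpha-m+l-1$; orthogonality against $\mu_\beta$ kills every term except $j=k$, giving at once
\begin{equation*}
w_{k,l-1}=\frac{(m-l+1)_k}{k!}\,\Gamma(\alpha-m+l)=\frac{\Gamma(\alpha-m+l)}{(m-l)!}\,(k+1)_{m-l},
\end{equation*}
which is the first summand of \eqref{iRdSip}.
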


We find the differential properties of the orthogonal polynomials
$(q_n)_n$ (\ref{iquss}) by using the concept of $\D$-operator.
This is an abstract concept introduced by one of the authors in
\cite{du1} which has shown to be very useful to generate
orthogonal polynomials which are also eigenfunctions of
differential, difference or $q$-difference operators (see
\cite{AD,du1,ddI}). The basic facts about $\D$-operators will be
recalled in Section 3. Using the general theory of $\D$-operators
and the expression (\ref{iquss}) for the orthogonal polynomials
$(q_n)_n$, we construct an algebra of differential operators for
which they are eigenfunctions. An important issue will be
the explicit calculation of the order of these operators in terms
of the matrix $M$ which defines the
 discrete Laguerre-Sobolev bilinear form (\ref{idslip}). This will raise  the concept of \emph{$\alpha$-weighted rank} associated to a  matrix $M$ defined as follows.

\begin{definition}\label{wrM}
Let $M$ be a $m\times m$ matrix. Write $c_1,\ldots , c_m$, for the columns of $M$ and define the numbers $n_j$, $j=1,\ldots , m,$ by
$$
n_1=\begin{cases} \alpha +m-1,& \mbox{if $c_m\not =0$,}\\ 0,&\mbox{if $c_m=0$;}\end{cases}
$$
and for $j=2,\ldots , m,$
\begin{equation}\label{nj}
n_j=\begin{cases} \alpha +m-j,& \mbox{if $c_{m-j+1}\not \in <c_{m-j+2},\ldots , c_m>$,}\\ 0,&\mbox{if $c_{m-j+1} \in <c_{m-j+2},\ldots , c_m>$.}\end{cases}
\end{equation}
Denote by $\tilde M$ the matrix whose columns are $c_i$, $i\in \{ j:n_{m-j+1}\not=0\}$ (i.e., the columns of $\tilde M$ are (from right to left) those columns $c_i$ of $M$ such that $c_i\not \in <c_{i+1},\ldots , c_m>$). Write $f_1,\ldots , f_m,$ for  the rows of $\tilde M$. We define the numbers $m_j$, $j=1,\ldots , m-1,$ by
\begin{equation}\label{mj}
m_j=\begin{cases} m-j,& \mbox{if $f_{j} \in <f_{j+1},\ldots , f_m>$,}\\ 0,&\mbox{if $f_{j}\not \in <f_{j+1},\ldots , f_m>$.}\end{cases}
\end{equation}
The \emph{$\alpha$-weighted rank} of the matrix $M$, $\awr (M)$ in short, is then defined by
$$
\awr(M)=\sum_{j=1}^m n_j+\sum_{j=1}^{m-1}m_j-\frac{m(m-1)}{2}.
$$
\end{definition}

We are now ready to establish in detail the differential
properties of the orthogonal polynomials $(q_n)_n$ (\ref{iquss}).

\begin{theorem}\label{mainth2}
Assume that any of the two equivalent properties (1) and (2) in
Theorem \ref{mainth1} hold, and assume, in addition, that $\alpha$
is a positive integer with $\alpha \ge m$. Then, for any polynomial
$S$, there exists a differential operator $D_S$ (which will be
explicitly constructed) of order $2(\deg S+\awr (M)+1)$ with
respect to which the orthogonal polynomials $(q_n)_n$
(\ref{iquss}) are eigenfunctions ($\awr (M)$ is the
$\alpha$-weighted rank of the matrix $M$ defined above). Moreover,
up to an additive constant, the corresponding eigenvalues
$(\lambda_n)_n$ of $D_S$ are $\lambda _n=P_S(n)$, where $P_S$ is
the polynomial defined by the first order difference equation
$$P_S(x)-P_S(x-1)=S(x)\Omega (x),$$ and $\Omega$ is the Casorati
determinant (\ref{casdet}). In particular, the minimal order of the differential operators having the orthogonal polynomials $(q_n)_n$ as eigenfunctions is at most $2(\awr (M)+1)$.
\end{theorem}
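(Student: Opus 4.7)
The plan is to combine the determinantal formula \eqref{iquss} for $q_n$ with the theory of $\D$-operators from \cite{du1,AD,ddI}. The Laguerre polynomials $(L_n^\alpha)_n$ are eigenfunctions of a second-order differential operator and carry natural $\D$-operators (essentially, the derivative and a parameter shift) under which this theory applies. The general framework recalled in Section 3 produces, from compatible perturbation sequences $(\mathcal{R}_l(n))_n$, a whole algebra of differential operators $D_S$ indexed by polynomials $S$, with the Casorati-determinant polynomials $q_n$ of \eqref{iquss} as common eigenfunctions and eigenvalues satisfying $P_S(x)-P_S(x-1)=S(x)\Omega(x)$. The first step is therefore to verify that the $\mathcal{R}_l$ in \eqref{iRdSip} are compatible with the Laguerre $\D$-operators; the precise split of \eqref{iRdSip} into a polynomial piece $(x+1)_{m-l}$ plus a piece $\Gamma(\alpha+1+x)/\Gamma(1+x)$ times a polynomial is chosen exactly to make each $\mathcal{R}_l$ fit the framework.

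Once the existence of $D_S$ and the eigenvalue formula are in place, the main task is to pin down the order of $D_S$ in terms of $\awr(M)$. The general $\D$-operator construction yields an a priori order of the form $2(\deg S + \sigma + 1)$, where $\sigma$ measures the ``effective'' polynomial degrees of the rows $\mathcal{R}_l$ in the determinant \eqref{iquss}. To replace this $\sigma$ by $\awr(M)$, I would exploit the fact that $\Omega(n)$, and hence both $q_n$ and the eigenvalues, are unchanged up to nonzero scalars by certain elementary column operations on $M$, specifically adding a multiple of column $c_j$ to column $c_i$ with $j>i$, and by analogous operations on rows after passing to the reduced matrix $\tilde M$. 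These reductions put $M$ into the normal form captured by Definition \ref{wrM}: the nonzero $n_j$'s are exactly those columns independent of their successors, and the nonzero $m_j$'s the rows of $\tilde M$ independent of their successors.

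In this normal form each surviving column contributes a degree $n_j=\alpha+m-j$ coming from the factor $\Gamma(\alpha+1+x)(x-i+1)_i/\Gamma(1+x)$ inside the relevant $\mathcal{R}_l$, each surviving row of $\tilde M$ contributes an extra $m_j=m-j$, and all remaining terms are absorbed into lower-order pieces by the reductions. The correction $-m(m-1)/2$ is the systematic overlap produced by expanding the $m\times m$ Casorati determinant \eqref{casdet}: the $j$-th column carries an intrinsic shift of $-j$, creating $0+1+\cdots+(m-1)$ degree redundancies. Combining this with $\deg S$ and the base second order of the Laguerre differential operator gives exactly $2(\deg S + \awr(M)+1)$. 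Taking $S=1$ then yields the final sentence of the theorem.

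The hardest part is the order bookkeeping: one must translate the purely linear-algebraic quantities $n_j$ and $m_j$ from Definition \ref{wrM} into honest degree drops inside the $\D$-operator order bound, and verify that the column and row reductions on $M$ really are compatible with the $\D$-operator construction (not merely with the existence of $q_n$). This requires a careful expansion of \eqref{iquss} after each elementary operation and a detailed tracking of how linear dependencies among rows and columns of $M$ propagate through the determinant and through the functional relations obeyed by the $\mathcal{R}_l$; the combinatorial form of Definition \ref{wrM} is precisely the fingerprint of this bookkeeping.
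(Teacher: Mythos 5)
The first half of your proposal is essentially the paper's argument: since $\alpha$ is a positive integer with $\alpha\ge m$, the functions $\mathcal R_l$ of \eqref{iRdSip} are genuine polynomials (because $\Gamma(\alpha+1+x)/\Gamma(1+x)=(x+1)_\alpha$), so the determinant \eqref{iquss} matches the general form \eqref{qus} of Theorem \ref{Teor1} with $p_n=L_n^\alpha$, $\varepsilon_n=-1$ and the single $\D$-operator $\D=d/dx$ of Lemma \ref{lTlag}; Theorem \ref{Teor1} then produces $D_S$ via \eqref{Dq} with eigenvalues $P_S(n)$, and Lemma \ref{l3.5} (with $r=2$, $s=1$) gives the order $2(\deg S+\deg \Omega+1)$. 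Up to this point you follow the paper's route, although you leave implicit the one hypothesis that makes the machinery applicable, namely that each $\mathcal R_l$ is a polynomial.

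The gap is in the second half, the identity $\deg\Omega=\awr(M)$ (the paper's Lemma \ref{Lemawr}), which is where all the real work lies. Your mechanism is elementary column operations on $M$ (and row operations on $\tilde M$), justified by the claim that $\Omega$, $q_n$ and the eigenvalues change only by nonzero scalars under such operations. That claim is false. Replacing $M$ by $ME$ changes the bilinear form \eqref{idslip}, hence the orthogonal polynomials; at the level of the functions, $\mathcal R_l=s_l+(l-1)!\,(x+1)_\alpha\, r_l$, where $s_l$ is independent of $M$ and only $r_l$ depends (linearly) on the $(l-1)$-st row of $M$, so an operation on the columns (or rows) of $M$ perturbs each $\mathcal R_l$ by a term of the form $(l-1)!\,(x+1)_\alpha\cdot(\,\cdot\,)$ that is \emph{not} a linear combination of the other $\mathcal R_k$'s; consequently $\Omega$ changes as a polynomial, not by a scalar factor. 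Only linear recombinations of the functions $\mathcal R_1,\dots,\mathcal R_m$ themselves rescale $\Omega$, and these do not correspond to operations on $M$, precisely because the $M$-independent summands $s_l$ do not mix along. The paper's proof is built to overcome exactly this obstruction: it applies the QL factorization $M=UT$ with $U$ orthogonal, completes the orthogonal columns of $U$ to a basis $q_1,\dots,q_m$ with a prescribed zero pattern (the Claim, proved in three steps using Lemma \ref{cdm}), and then recombines the functions as $\tilde{\mathcal R}_i=q_i(\mathcal R_1,\dots,\mathcal R_m)^T$, an operation that does rescale $\Omega$ by $\det U\neq 0$. The zero pattern is what lets a single matrix $U$ simultaneously control the degrees of the $M$-dependent parts $\tilde r_i$ (producing the numbers $n_j$, through the echelon structure of $T$) and of the $M$-independent parts $\tilde s_i$ (producing the numbers $m_j$), so that the $\tilde{\mathcal R}_i$ have pairwise distinct degrees and Lemma \ref{lgp1} can be invoked. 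Your proposal has no substitute for this simultaneous control: to complete your plan you would have to prove directly that $\deg\Omega$ is invariant under your reductions, which is essentially the statement to be proved, so the argument as written is circular at its crucial step.
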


When $M=\begin{pmatrix} M_{0,0}&M_{0,1}\\M_{0,1}&M_{1,1}\end{pmatrix}$, we get
$$
\awr (M)=\begin{cases}
\alpha -1 , &\mbox{for $M_{0,0}\not =0, M_{1,1}=0, M_{0,1}=0$},\\
\alpha +1 , &\mbox{for $M_{0,0}=0, M_{1,1}\not=0, M_{0,1}=0$},\\
2\alpha , &\mbox{for $M_{0,0}\not =0, M_{1,1}\not=0, M_{0,1}=0$},
\end{cases}
$$
and we recover Koekoeks' (\cite{koekoe}), Koekoeks' and Bavinck (\cite{KKB}), and Iliev results (\cite{I}).

\section{Preliminaries. Discrete Sobolev inner products}
We say that a sequence of polynomials $(q_n)_n$, with $q_n$ of
degree $n$, $n\ge 0$, is (left) orthogonal with respect to a
bilinear form $B$ (not necessarily symmetric) defined in the
linear space of real polynomials if $B(q_n,q)=0$ for all
polynomials $q$ with degree of $q$ less that $n$ and $B(q_n,q_n)\not
=0$. It is clear from the definition that (left) orthogonal polynomials with respect to a bilinear form, if they exist, are unique up to multiplication by nonnegative constants. Given a measure $\mu$ (positive or not), with finite moments of any order, we consider the bilinear form
$B_\mu (p,q)=\int pqd\mu$. We then say that a sequence of polynomials $(q_n)_n$, with $q_n$ of
degree $n$, $n\ge 0$, is orthogonal with respect to the measure $\mu$ if it is orthogonal with respect to the bilinear form $B_\mu$.

We will use the following Lemma to construct (left) orthogonal
polynomials with respect to a discrete Sobolev bilinear form. We have not found in the literature the formula (\ref{quss}): it gives the discrete Sobolev polynomials in terms of $m$ consecutive orthogonal polynomials with respect to the measure $(x-\lambda)^m\nu$, when it is more usual to use
the orthogonal polynomials with respect to the measure $\nu$, or other variants (see \cite{AMRR, MSR}).

\begin{lemma}\label{LemLagSob}
For $m\geq1$, let $M=(M_{i,j})_{i,j=0}^{m-1}$ be a $m\times m$ matrix. For a given measure $\nu$ and for certain real number $\lambda$ consider the discrete Sobolev bilinear form defined by
\begin{equation}\label{DSip}
\langle p,q\rangle=\int p(x)q(x)d\nu(x)+\mathbb{P}(\lambda)M\mathbb{Q}(\lambda)^T,
\end{equation}
where
$\mathbb{P}(\lambda)=\left(p(\lambda),p'(\lambda),\ldots,p^{(m-1)}(\lambda)\right)$.
Assume that the measure $(x-\lambda)^m\nu$ has a sequence
$(p_n)_n$ of orthogonal polynomials, and write $w_{n,i}=\int
(x-\lambda)^ip_{n}d\nu $, $i=1,\ldots,m$. For $l=1,\ldots,m,$
define the sequences $(R_l(n))_n$
\begin{equation}\label{RdSip}
R_l(n)=w_{n,l-1}+(l-1)!\sum_{i=0}^{m-1}M_{l-1,i}p_n^{(i)}(\lambda).
\end{equation}
Then the following conditions are equivalent
\begin{enumerate}
\item The discrete Sobolev bilinear form (\ref{DSip}) has a sequence $(q_n)_n$ of (left) orthogonal polynomials (in particular $\deg(q_n)=n$).
\item The $m\times m$ Casorati determinant $\Omega (n)=\det (R_i(n-j))_{i,j=1}^m$ does not vanish for $n\geq0$.
\end{enumerate}
Moreover, if one of these properties holds, the polynomial defined
by
\begin{equation}\label{quss}
q_n(x)=\begin{vmatrix}
               p_n(x) & p_{n-1}(x) & \cdots & p_{n-m}(x) \\
R_1(n) &R_1(n-1) & \cdots & R_1(n-m) \\
               \vdots & \vdots & \ddots & \vdots \\
               R_m(n) &  R_m(n-1) & \cdots & R_m(n-m)
             \end{vmatrix}
\end{equation}
has degree $n$, $n\ge 0$, and the sequence $(q_n)_n$ is (left)
orthogonal with respect to (\ref{DSip}) (for $n<0$ we take $p_n =0$).
\end{lemma}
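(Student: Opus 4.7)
The plan is to pin down the form of $q_n$ by first exploiting the orthogonality against the ideal of test polynomials vanishing to order $m$ at $\lambda$, and then to resolve the remaining $m$ conditions by a Cramer's rule / determinant argument that produces both the Casorati criterion and the explicit formula (\ref{quss}).

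First I would observe that for any $r$ with $\deg r < n - m$, the test polynomial $q(x) = (x-\lambda)^m r(x)$ has $q^{(j)}(\lambda) = 0$ for $j = 0, \ldots, m-1$, so the discrete part of (\ref{DSip}) vanishes and $\langle q_n, (x-\lambda)^m r\rangle$ reduces to $\int q_n(x)(x-\lambda)^m r(x)\, d\nu(x)$. Left orthogonality of $q_n$ then forces $q_n$ to be $(x-\lambda)^m \nu$-orthogonal to $\mathbb{R}[x]_{<n-m}$; combined with $\deg q_n = n$ and the quasi-definiteness implicit in the existence of $(p_n)_n$, this yields
\begin{equation*}
q_n(x) = \sum_{k=0}^{m} a_k\, p_{n-k}(x)
\end{equation*}
for some scalars $a_k$, with $a_0 \neq 0$ precisely when $\deg q_n = n$.

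Next, I would complete a basis of polynomials of degree $<n$ by the $m$ polynomials $(x-\lambda)^{l-1}$, $l = 1, \ldots, m$, which are transversal to the ideal $(x-\lambda)^m\,\mathbb{R}[x]$. A direct evaluation of $\langle q_n, (x-\lambda)^{l-1}\rangle$, using that $((x-\lambda)^{l-1})^{(j)}(\lambda) = (l-1)!\, \delta_{j, l-1}$ collapses the discrete part of (\ref{DSip}) onto a single index of $M$, produces exactly
\begin{equation*}
\sum_{k=0}^{m} a_k\, R_l(n-k) = 0, \qquad l = 1, \ldots, m,
\end{equation*}
with $R_l$ as defined in (\ref{RdSip}). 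This is a homogeneous $m \times (m+1)$ linear system for $(a_0, \ldots, a_m)$, and its solvability with $a_0 \neq 0$ is controlled precisely by the $m \times m$ Casorati minor $\Omega(n)$ obtained by deleting the $a_0$-column.

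The equivalence of (1) and (2) and the validity of (\ref{quss}) then follow by Cramer's rule. When $\Omega(n) \neq 0$, the solution space is one-dimensional and can be normalised so that $a_0 = \Omega(n)$; assembling the Cramer cofactors recovers exactly the determinant (\ref{quss}), and expansion along the top row yields $q_n(x) = \Omega(n)\, p_n(x) + (\text{lower degree})$, confirming $\deg q_n = n$. Each orthogonality equation $\sum_k a_k R_l(n-k) = 0$ then becomes, on the determinantal side, the vanishing of a determinant with the top row repeated as the $(l+1)$-st row — manifestly zero. The step I expect to be the main obstacle is the converse, (1) $\Rightarrow$ (2): showing that $\Omega(n) = 0$ genuinely prevents a valid $q_n$ from existing, since in principle the full $m\times(m+1)$ coefficient matrix could drop rank so as to admit a spurious solution with $a_0 \neq 0$. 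My intended route is to show that, under the ansatz forced in the first paragraph, the leading coefficient of any candidate $q_n$ is proportional to $\Omega(n)$, so that $\Omega(n) = 0$ either destroys $\deg q_n = n$ or makes $\langle q_n, q_n\rangle$ vanish, ruling out a genuine left orthogonal sequence.
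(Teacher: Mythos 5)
Your setup coincides with the paper's own: orthogonality against $(x-\lambda)^m r$ forces $q_n=\sum_{k=0}^m a_k p_{n-k}$, and testing against $(x-\lambda)^{l-1}$ produces the homogeneous system $\sum_{k=0}^m a_k R_l(n-k)=0$. The genuine gap is your route for (1) $\Rightarrow$ (2). You want to argue at the single level $n$ that the leading coefficient of any candidate $q_n$ is proportional to $\Omega(n)$, so that $\Omega(n)=0$ "either destroys $\deg q_n=n$ or makes $\langle q_n,q_n\rangle$ vanish". That intermediate claim is false exactly in the rank-drop scenario you yourself flag: the proportionality to the Cramer vector holds only when the $m\times(m+1)$ matrix has full rank $m$. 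Concretely, take $m=1$, so $\Omega(n)=R_1(n-1)$, and suppose $R_1(n)=R_1(n-1)=0$. Then for any $c\neq 0$ the polynomial $q_n=p_n+c\,p_{n-1}$ has degree $n$, satisfies $\langle q_n,1\rangle=R_1(n)+cR_1(n-1)=0$ and $\langle q_n,(x-\lambda)r\rangle=0$ for $\deg r<n-1$, and moreover $\langle q_n,q_n\rangle$ is a nonzero multiple of $\langle q_n,(x-\lambda)^n\rangle=c\int p_{n-1}(x-\lambda)^n d\nu\neq 0$. So a perfectly valid left orthogonal polynomial of degree $n$ exists at that level even though $\Omega(n)=0$; what fails when $\Omega$ vanishes is the existence of the sequence at \emph{other} indices. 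Hence no single-level argument can close (1) $\Rightarrow$ (2). The paper's proof is necessarily a sequence-level argument: given the full sequence $(q_k)_k$, any solution $(\phi_{n,j})_j$ of the linear system produces $\tilde q_n=\sum_j\phi_{n,j}p_{n-j}$ orthogonal to every polynomial of degree $<n$, and uniqueness of left orthogonal polynomials (which uses the existence of the whole sequence) forces $\tilde q_n=q_n$; a solvable square system with a unique solution has nonsingular matrix, i.e.\ $\Omega(n)\neq0$. You would need to import this (or some other cross-index) mechanism; your sketch as written cannot be completed.

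There is a second, smaller omission in (2) $\Rightarrow$ (1): you verify $\deg q_n=n$ and orthogonality to all degrees $<n$, but left orthogonality also demands $\langle q_n,q_n\rangle\neq0$, which is not automatic for a bilinear form that need not be definite, nor even symmetric. It holds here because the cofactor of $p_{n-m}$ in \eqref{quss} is $(-1)^m\Omega(n+1)$, and for $n\ge m$ one has $\langle q_n,q_n\rangle$ proportional to $\langle q_n,(x-\lambda)^n\rangle=(-1)^m\Omega(n+1)\int p_{n-m}(x-\lambda)^{n-m}(x-\lambda)^m d\nu$, which is nonzero precisely because hypothesis (2) holds at the \emph{next} index $n+1$ and $(p_k)_k$ is orthogonal with respect to $(x-\lambda)^m\nu$. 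This cross-index use of the hypothesis (the paper's remark that $\beta_{n,m}$ equals $\Omega(n+1)$ up to normalization) is essential and absent from your write-up. Finally, both you and the paper treat only $n\ge m$ carefully; the range $0\le n<m$, where some entries $R_l(n-j)$ have negative argument, needs a separate (if routine) discussion, which the paper at least flags.
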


\begin{proof}
We first prove (1) $\Rightarrow $ (2).

We can assume that both polynomials  $q_n$ and $p_n$ have equal leading coefficient. Hence we can write
$$
q_n(x)=p_n(x)+\sum_{j=1}^n\beta_{n,j}p_{n-j}(x).
$$
The definition of the discrete Sobolev bilinear form (\ref{DSip}) gives then for $m<j\le n$,
\begin{align*}
\beta_{n,j}\int p_{n-j}^2(x-\lambda)^md\nu &=\int q_n(x)p_{n-j}(x)(x-\lambda)^md\nu\\
&=\langle q_n,(x-\lambda)^m p_{n-j}\rangle =0.
\end{align*}
Since we are assuming that $(p_n)_n$ are orthogonal with respect to $(x-\lambda)^m\nu$, we have $\int p_{n-j}^2(x-\lambda)^md\nu \not=0$. Then, for $n\ge 0$,
$$
q_n(x)=\sum_{j=0}^m\beta_{n,j}p_{n-j}(x),
$$
where we set $\beta_{n,0}=1$. Using this identity, we get
\begin{align*}
\langle q_n,(x-\lambda)^l\rangle &=\int (x-\lambda)^lq_n(x)d\nu +l!\sum_{i=0}^{m-1}q_n^{(i)}(\lambda)M_{l,i}\\
&=\sum_{j=0}^m\beta_{n,j}\int (x-\lambda)^lp_{n-j}(x)d\nu +l!\sum_{i=0}^{m-1}M_{l,i}\sum_{j=0}^m\beta_{n,j}p_{n-j}^{(i)}(\lambda)\\
&=\sum_{j=0}^m\beta_{n,j}w_{n-j,l} +l!\sum_{j=0}^m\beta_{n,j}\sum_{i=0}^{m-1}M_{l,i}p_{n-j}^{(i)}(\lambda)\\
&=R_{l+1}(n)+\sum_{j=1}^m\beta_{n,j}R_{l+1}(n-j).
\end{align*}
Assume now $n\ge m$. For $l=0,\ldots , m-1$, we then get
$$
0=\langle q_n,(x-\lambda)^l\rangle =R_{l+1}(n)+\sum_{j=1}^m\beta_{n,j}R_{l+1}(n-j).
$$
This shows that the linear system
\begin{equation}\label{lins}
\sum_{j=1}^m\phi_{n,j}R_l(n-j)=-R_l(n),\quad l=1,\ldots, m,
\end{equation}
has at least a solution $\phi_{n,j}$, $j=1,\ldots, m$; more precisely this solution is given by
$\phi_{n,j}=\beta_{n,j}$, $j=1,\ldots, m$. We now prove that this is the unique solution of the linear system (\ref{lins}).

Take any other solution $\phi_{n,j}$, $j=1,\ldots, m$, of the linear system (\ref{lins}) and define the polynomial of degree $n$
$$
\tilde q_n(x)=\sum_{j=0}^m\phi_{n,j}p_{n-j}(x),
$$
where we write $\phi_{n,0}=1$. Proceeding as before we get for $l=0,\ldots, m-1,$
$$
\langle \tilde q_n,(x-\lambda)^l\rangle =R_{l+1}(n)+\sum_{j=1}^m\phi_{n,j}R_{l+1}(n-j)=0.
$$
On the other hand, for $l=m,\ldots , n-1$, we have
$$
\langle \tilde q_n,(x-\lambda)^l\rangle =\int \tilde q_n(x)(x-\lambda)^ld\nu=\sum _{j=0}^m\phi _{n,j}\int p_{n-j}(x)(x-\lambda)^{l-m}(x-\lambda)^md\nu=0.
$$
This is saying that $\tilde q_n$ is orthogonal to any polynomial of degree less than $n$ with respect to the discrete Sobolev bilinear form (\ref{DSip}).
Since $\tilde q_n$ and $q_n$ have degree $n$ and equal leading coefficient they must be equal. Hence $\phi_{n,j}=\beta_{n,j}$, $j=1,\ldots ,m$. Since
the linear system (\ref{lins}) has only one solution, we deduce that $\Omega(n)\not =0, n\geq0$.

If $0\le n<m$ we can proceed in a similar way.

We now prove that (2) $\Rightarrow $ (1). Since $\Omega(n)\not =0$, $n\ge 0$, the linear system (\ref{lins}) has a unique solution which we call
$\beta_{n,j}$. Define now the polynomials
$$
q_n(x)=\sum_{j=0}^m\beta_{n,j}p_{n-j}(x),
$$
where we again write $\beta_{n,0}=1$.
Assume first that $n\ge m$. Proceeding as before, we can prove that $\langle q_n,(x-\lambda)^l\rangle=0$, $l=0,\ldots , n-1$. It is then enough to prove that $\langle q_n,(x-\lambda)^n\rangle\not =0$. But
\begin{align*}
\langle q_n,(x-\lambda)^n\rangle &=\int
q_n(x)(x-\lambda)^nd\nu=\sum _{j=0}^m\beta_{n,j}\int
p_{n-j}(x)(x-\lambda)^{n-m}(x-\lambda)^md\nu\\ &=\beta_{n,m}\int
p_{n-m}(x)(x-\lambda)^{n-m}(x-\lambda)^md\nu\not =0,
\end{align*}
since $\beta_{n,m}=\Omega(n+1)$.

For $0\le n<m$, we can proceed analogously.

\end{proof}

The following technical result will be used later in Lemma \ref{Lemawr}.

\begin{lemma}\label{cdm}
Consider the vectors $q_1,\ldots , q_r\in\mathbb{R}^m$. Denote by $e_i$, $i=1,\ldots , m$, the canonical basis in $\mathbb{R}^m$, that is $e_{i,j}=\delta_{i,j}$. Assume that
\begin{enumerate}
\item the vectors $q_i$, $i=1,\ldots , r$, are unitary and orthogonal;
\item for certain $i_0$, $1\le i_0<m$, $e_1,\ldots, e_{i_0}\in <q_1,\ldots , q_r>;$
\item $e_{i_0+1}\not \in  <q_1,\ldots , q_r>$.
\end{enumerate}
Then there exists an unitary vector $q_{r+1}$ orthogonal to $q_1,\ldots , q_r$ satisfying that $q_{r+1}\in <q_1,\ldots , q_r,e_{i_0+1}>$ and $e_{i_0+1}\in <q_1,\ldots , q_r,q_{r+1}>.$
\end{lemma}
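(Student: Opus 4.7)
The plan is to produce $q_{r+1}$ by the standard Gram--Schmidt procedure applied to $e_{i_0+1}$. Set
$$
v=e_{i_0+1}-\sum_{i=1}^r \langle e_{i_0+1},q_i\rangle\, q_i,
$$
where $\langle\cdot,\cdot\rangle$ denotes the usual inner product in $\mathbb{R}^m$. Since assumption (1) guarantees that $\{q_1,\ldots,q_r\}$ is orthonormal, the sum $\sum_{i=1}^r \langle e_{i_0+1},q_i\rangle q_i$ is exactly the orthogonal projection of $e_{i_0+1}$ onto $\langle q_1,\ldots,q_r\rangle$. Assumption (3) says $e_{i_0+1}\not\in\langle q_1,\ldots,q_r\rangle$, so $v\neq 0$, and we may define
$$
q_{r+1}=\frac{v}{\|v\|}.
$$

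Now I would verify the four required properties. First, $q_{r+1}$ is unitary by construction. Second, for each $i=1,\ldots,r$, orthonormality of $\{q_1,\ldots,q_r\}$ gives $\langle v,q_i\rangle=\langle e_{i_0+1},q_i\rangle-\langle e_{i_0+1},q_i\rangle=0$, hence $q_{r+1}\perp q_i$. Third, the defining formula writes $q_{r+1}$ as a linear combination of $q_1,\ldots,q_r$ and $e_{i_0+1}$, so $q_{r+1}\in\langle q_1,\ldots,q_r,e_{i_0+1}\rangle$. Fourth, solving the same formula for $e_{i_0+1}$ yields
$$
e_{i_0+1}=\|v\|\,q_{r+1}+\sum_{i=1}^r \langle e_{i_0+1},q_i\rangle\, q_i\in\langle q_1,\ldots,q_r,q_{r+1}\rangle,
$$
which is the last assertion.

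There is no real obstacle here; the argument is completely standard. Note that assumption (2) ($e_1,\ldots,e_{i_0}\in\langle q_1,\ldots,q_r\rangle$) is not actually needed for the construction or the verification; it is stated because it reflects the inductive setting in which Lemma \ref{cdm} will be invoked in the proof of Lemma \ref{Lemawr}, where one processes the canonical basis vectors $e_1,e_2,\ldots$ one by one and $i_0$ records how many have already been absorbed into the span.
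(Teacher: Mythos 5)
Your proof is correct and is essentially the paper's own argument: the paper also writes $e_{i_0+1}=u+v$ with $u$ the projection onto $\langle q_1,\ldots,q_r\rangle$ and $v\neq 0$ orthogonal to it (nonzero by assumption (3)), and sets $q_{r+1}=v/\Vert v\Vert$; your Gram--Schmidt formula just makes the projection $u$ explicit and spells out the verifications. Your observation that assumption (2) is not used in the construction is also consistent with the paper, whose proof likewise never invokes it.
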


\begin{proof}
Indeed, since $e_{i_0+1}\not \in  <q_1,\ldots , q_r>$, we can write $e_{i_0+1}=u+v$, where $u\in <q_1,\ldots , q_r>$, $v\in <q_1,\ldots , q_r>^\bot$, $v\not =0$. Define $q_{r+1}=v/\Vert v\Vert $. Then $q_{r+1}$ is unitary and orthogonal to $q_1,\ldots , q_r$.  By construction, we  see that
$q_{r+1}\in <q_1,\ldots , q_r,e_{i_0+1}>$ and $e_{i_0+1}\in <q_1,\ldots , q_r,q_{r+1}>.$
\end{proof}

To make the reading of this paper easier, we include here some known formulas for Laguerre polynomials which we will need later.

\subsection{Laguerre polynomials.}\label{LGP} For $\alpha\in\mathbb{R}$, we use the standard definition of the Laguerre polynomials $(L_n^{\alpha})_n$ (see \cite{EMO}, pp. 188--192)
\begin{equation}\label{deflap}
L_n^{\alpha}(x)=\sum_{j=0}^n\frac{(-x)^j}{j!}\binom{n+\alpha}{n-j}.
\end{equation}
They satisfy a three-term recurrence formula ($L_{-1}^{\alpha}=0$)
\begin{equation*}\label{ttrrL}
xL_n^{\alpha}=-(n+1)L_{n+1}^{\alpha}+(2n+\alpha+1)L_n^{\alpha}-(n+\alpha)L_{n-1}^{\alpha}.
\end{equation*}
Hence, for $\alpha\neq-1,-2,\ldots$, they are orthogonal with respect to a measure $\mu_{\alpha}=\mu_{\alpha}(x)dx$. This measure is positive
only when $\alpha>-1$ and then $\mu_{\alpha}$ is defined by \eqref{wL}.

The Laguerre polynomials are eigenfunctions of the following second-order differential operator
\begin{equation}\label{DopL}
D_{\alpha}=-x\left(\frac{d}{dx}\right)^2-(\alpha+1-x)\frac{d}{dx},\quad D_{\alpha}(L_n^{\alpha})=nL_n^{\alpha},\quad n\geq0.
\end{equation}
We will also use the following formulas
\begin{equation}\label{Lagder}
    \left(L_n^{\alpha}\right)'=-L_{n-1}^{\alpha+1},
\end{equation}
\begin{equation}\label{Lagab}
   L_n^{\alpha}=\sum_{j=0}^n\frac{(\alpha-\beta)_j}{j!}L_{n-j}^{\beta}.
\end{equation}

\section{$\D$-operators}

The concept of $\D$-operator was introduced by one of the authors in \cite{du1}. In \cite{du1}, \cite{AD} and \cite{ddI} it has been showed that $\D$-operators turn out to be an extremely useful tool of a method
to generate families of polynomials which are eigenfunctions of higher order differential, difference or $q$-difference operators. The purpose of this section is to remind that method and its main ingredient: $\D$-operators.

The starting point is a sequence of polynomials $(p_n)_n$, $\deg p_n=n$, and an algebra of operators $\A $ acting in the linear space of
polynomials $\mathbb{P}$. In this paper, we consider the algebra $\A$ formed by all finite order differential operators $\sum_{j=0}^sf_j(d/dx)^j$ where $f_j\in\PP$ is a polynomial with degree at most $j$, $j=0,\ldots, s$:
\begin{equation}\label{algdiffc}
\A =\left\{ \sum_{j=0}^sf_j\left(\frac{d}{dx}\right)^j : f_j\in \PP, \deg(f_j)\le j, j=0,\ldots,s, s\in \NN \right\}.
\end{equation}
If $f_s\neq0$ we say that the \emph{order} of such differential operator is $s$.

In addition, we assume that the polynomials $p_n$, $n\ge 0$, are eigenfunctions of certain operator $D_p\in \A$. We write $(\theta_n)_n$ for the corresponding eigenvalues, so that $D_p(p_n)=\theta_np_n$, $n\ge 0$. In this paper we only consider the case when the sequence of eigenvalues $(\theta _n)_n$ is linear in $n$ (for $\D$-operators associated to polynomials $(p_n)_n$ for which the sequence of eigenvalues $(\theta _n)_n$ is not linear in $n$ see \cite{du1}).

Given  a sequence of numbers $(\varepsilon_n)_n$, a $\D$-operator associated to the algebra $\A$ and the sequence of polynomials
$(p_n)_n$ is defined as follows.
We first consider  the operator $\D :\PP \to \PP $ defined by linearity
from
\begin{equation}\label{defTo}
\D (p_n)=\sum _{j=1}^n (-1)^{j+1}\varepsilon _n\cdots \varepsilon _{n-j}p_{n-j},\quad n\ge 0.
\end{equation}
We then say that $\D$ is a $\D$-operator if $\D\in \A$.

Let us now provide an example of $\D$-operator for the Laguerre polynomials.

\begin{lemma}\label{lTlag} (Lemma A.1 of \cite{du1})
For $\alpha \in \RR $, let $p_{n}=L_{n}^{\alpha }$, $n\ge 0$, be the Laguerre polynomials given by (\ref{deflap}).
Then the operator $\D$ defined by (\ref{defTo}) from the sequence $\varepsilon _n=-1$, $n\ge 0$,
is a $\D$-operator for the Laguerre polynomials and the algebra $\A$  (\ref{algdiffc}).
More precisely  $\D=d/dx$.
\end{lemma}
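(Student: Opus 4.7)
The plan is to verify directly the two assertions of the lemma: that the operator $\D$ defined from the sequence $\varepsilon_n=-1$ by the series (\ref{defTo}) belongs to the algebra $\A$, and that it coincides with $d/dx$. Since $d/dx$ is a first order differential operator with coefficient the constant $1$ (a polynomial of degree $0\le 1$), it manifestly lies in $\A$ as defined in (\ref{algdiffc}); thus proving the identification $\D=d/dx$ automatically yields membership in $\A$. So I would concentrate on showing $\D=d/dx$ on the basis $(L_n^\alpha)_{n\ge 0}$ of $\PP$.

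The first step is to simplify the right-hand side of (\ref{defTo}) when all the $\varepsilon_i$ equal $-1$. In that case each product $\varepsilon_n\cdots\varepsilon_{n-j}$ becomes a pure sign, which, together with the prefactor $(-1)^{j+1}$, reduces to a sign independent of $j$. This collapses (\ref{defTo}) to a telescoped expression of the form
$$\D(L_n^\alpha)=\pm\sum_{j=1}^n L_{n-j}^\alpha=\pm\sum_{k=0}^{n-1}L_k^\alpha.$$

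The second step is to recognise this sum. Specialising (\ref{Lagab}) with $\alpha\mapsto\alpha+1$ and $\beta=\alpha$ gives $(\alpha-\beta)_j/j!=(1)_j/j!=1$, so that
$$L_{n-1}^{\alpha+1}=\sum_{k=0}^{n-1}L_k^\alpha.$$
Combining this with the derivative formula (\ref{Lagder}), namely $(L_n^\alpha)'=-L_{n-1}^{\alpha+1}$, I will obtain $(L_n^\alpha)'=-\sum_{k=0}^{n-1}L_k^\alpha$, which, after aligning the overall sign from the previous step, identifies $\D(L_n^\alpha)$ with $(L_n^\alpha)'$ for every $n\ge 0$. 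That completes the verification.

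The only delicate point in the calculation is the sign produced by the product $\varepsilon_n\cdots\varepsilon_{n-j}$ (i.e., whether one counts $j$ or $j+1$ factors), but this is pure bookkeeping. Structurally there is no obstacle: the entire argument is a one-line calculation resting on the classical parameter-shift identity (\ref{Lagab}) and the derivative rule (\ref{Lagder}) for Laguerre polynomials.
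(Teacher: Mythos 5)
Your overall strategy is sound, and it is worth noting that the paper itself contains no proof of this lemma at all: the statement is simply quoted from Lemma A.1 of \cite{du1}. Your two ingredients --- the identity \eqref{Lagab} specialized to $\alpha\mapsto\alpha+1$, $\beta=\alpha$, which gives $L_{n-1}^{\alpha+1}=\sum_{k=0}^{n-1}L_k^{\alpha}$, and the derivative rule \eqref{Lagder} --- are exactly the right ones, and together with linearity on the basis $(L_n^{\alpha})_n$ of $\PP$ they identify $\D$ with a constant-coefficient first-order operator, which manifestly belongs to $\A$ as defined in \eqref{algdiffc}.

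However, the point you set aside as ``pure bookkeeping'' is precisely where the stated equality $\D=d/dx$ (as opposed to $\D=-d/dx$) is decided, and leaving a ``$\pm$'' to be ``aligned'' later is not a resolution. Read literally, the product $\varepsilon_n\cdots\varepsilon_{n-j}$ in \eqref{defTo} has $j+1$ factors; with $\varepsilon_n=-1$ it equals $(-1)^{j+1}$, the total coefficient is $(-1)^{j+1}(-1)^{j+1}=+1$, and you would obtain $\D(L_n^{\alpha})=\sum_{k=0}^{n-1}L_k^{\alpha}=L_{n-1}^{\alpha+1}=-(L_n^{\alpha})'$, i.e.\ $\D=-d/dx$, contradicting the lemma. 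The intended reading --- forced by consistency with the auxiliary numbers $\xi_{n,i}=\prod_{j=0}^{i-1}\varepsilon_{n-j}$ and the determinant \eqref{qus} of Theorem \ref{Teor1}, whose first row $(-1)^{j}\xi_{n,j}p_{n-j}$ must reduce to the first row of \eqref{iquss} when $\varepsilon_n=-1$ --- is that the product has $j$ factors, $\varepsilon_n\cdots\varepsilon_{n-j+1}=\xi_{n,j}$. Then the coefficient is $(-1)^{j+1}(-1)^{j}=-1$ for every $j$, so $\D(L_n^{\alpha})=-\sum_{k=0}^{n-1}L_k^{\alpha}=-L_{n-1}^{\alpha+1}=(L_n^{\alpha})'$, which is the claim. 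So complete your argument by fixing this convention explicitly (the formula \eqref{defTo} as printed is off by one factor) and computing the sign once: as written, the sign you need cannot be extracted from the formula you quote, and that sign is the entire difference between proving $\D=d/dx$ and merely proving $\D\in\A$.
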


We now show how to use $\D$-operators to construct new sequences of polynomials $(q_n)_n$ such that there exists an operator $D_q\in \A$ for which they are eigenfunctions. To do that we take $m$ arbitrary polynomials $R_1, R_2, \ldots, R_m$ and consider the $m\times m$  Casorati determinant defined by
\begin{equation}\label{casd1}
\Omega (x)=\det \left(R_l(x-j)\right)_{l,j=1}^m.
\end{equation}

The details of our method are included in the following Theorem (which it is the particular case of Theorem 3.2 in \cite{ddI} for the case where we only use one $\mathcal D$-operator for $(p_n)_n$ and $\mathcal A$).

\begin{theorem}\label{Teor1} Let $\A$ and $(p_n)_n$ be, respectively, an algebra of operators acting in the linear space of polynomials, and a sequence of polynomials with $\deg p_n=n$. We assume that $(p_n)_n$ are eigenfunctions of an operator $D_p\in \A$ with eigenvalues equal to $n$, that is, $D_p(p_n)=np_n$, $n\ge 0$. We also have a sequence of numbers $(\varepsilon_{n} )_n$, which defines a $\D$-operator $\D$  for $(p_n)_n$ and $\A$ (see \eqref{defTo})). We write $\xi_{n,i}$, $n\ge 0, 0\le i\le n$,  for the auxiliary numbers defined by
\begin{equation*}\label{defxi}
\xi_{n,i}=\prod_{j=0}^{i-1}\varepsilon_{n-j}, \quad 1\le i\le n,\quad \quad \xi_{n,0}=1.
\end{equation*}

Let $R_1, R_2, \ldots, R_m$ be $m$ arbitrary polynomials satisfying that $\Omega (n)\not =0$, $n\ge 0$, where $\Omega $ is the Casorati determinant defined by \eqref{casd1}.

Consider the sequence of polynomials $(q_n)_n$ defined by
\begin{equation}\label{qus}
q_n(x)=\begin{vmatrix}
               p_n(x) & -\xi_{n,1}p_{n-1}(x) & \cdots & (-1)^m\xi_{n,m}p_{n-m}(x) \\
R_1(n) &R_1(n-1) & \cdots & R_1(n-m) \\
               \vdots & \vdots & \ddots & \vdots \\
               R_m(n) &  R_m(n-1) & \cdots & R_m(n-m)
             \end{vmatrix}.
\end{equation}
For a polynomial $S$ and $h=1,\ldots,m$, we define the polynomials $M_h(x)$ by
\begin{align}\label{emeiexp}
\nonumber M_h(x)&=\sum_{j=1}^m(-1)^{h+j}S(x+j)\times\\
&\qquad\times\det\left(R_l(x-r)\right)_{\scriptsize \left\{\begin{array}{l}
                                                           l\in\{1,2,\ldots,m\}\setminus\{h\} \\
                                                            r\in\{-j+1,-j+2,\ldots,m-j\}\setminus\{0\}
                                                          \end{array}\right\}}.
\end{align}
Then there exists an operator $D_{q,S}\in \A$ such that
$$
D_{q,S}(q_n)=\lambda_nq_n,\quad n\ge 0.
$$
Moreover, an explicit expression of this operator can be displayed. Indeed, write  $P_S$ for the polynomial defined by
\begin{equation}\label{Pgs}
P_S(x)-P_S(x-1)=S(x)\Omega (x).
\end{equation}
Then the operator $D_{q,S}$ is defined by
\begin{equation}\label{Dq}
D_{q,S}=P_S(D_p)+\sum_{h=1}^mM_h(D_p)\D R_h(D_p),
\end{equation}
where $D_p\in \A$ is the operator for which the polynomials $(p_n)_n$ are eigenfunctions. Moreover
$\lambda_n=P_S(n)$.

\end{theorem}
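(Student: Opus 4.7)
The plan is to verify directly that $D_{q,S}(q_n) = P_S(n)\, q_n$ for every $n \geq 0$. First, I would expand the determinant (\ref{qus}) along its first row to write
\begin{equation*}
q_n(x) = \sum_{k=0}^m \xi_{n,k}\, p_{n-k}(x)\, \Omega_k(n),
\end{equation*}
where $\Omega_k(n)$ is the $m\times m$ minor obtained by removing the first row and $(k+1)$-st column of the matrix in (\ref{qus}); note that $\Omega_0(n)=\Omega(n)$. Since the scalars $\xi_{n,k}\Omega_k(n)$ are independent of $x$ and each $p_{n-k}$ is a $D_p$-eigenfunction, any polynomial in $D_p$ acts diagonally on the basis $\{p_{n-k}\}$, which reduces the verification to a combinatorial identity among scalars.

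Next I would apply the two summands of (\ref{Dq}) separately. The first piece gives
$P_S(D_p)(q_n) = \sum_{k=0}^m \xi_{n,k}\, P_S(n-k)\, p_{n-k}(x)\, \Omega_k(n)$.
For the second, $R_h(D_p)\,p_{n-k}=R_h(n-k)\,p_{n-k}$; then $\D$ acts by (\ref{defTo}) producing a strictly lower-triangular expansion in the $p_\bullet$-basis, and finally $M_h(D_p)$ attaches eigenvalues $M_h(n-k-j)$. Collecting contributions by $p_{n-i}(x)$ and using the telescoping identity (\ref{Pgs}) to rewrite $P_S(n)-P_S(n-i)$ as a sum of $S(n-r+1)\Omega(n-r+1)$, the coefficient of $p_{n-i}(x)$ in $D_{q,S}(q_n) - P_S(n)q_n$ rearranges into a sum of $m\times m$ determinants. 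The explicit formula (\ref{emeiexp}) for $M_h$ is engineered precisely so that summation over $h$ with the factor $R_h(\,\cdot\,)$ recovers a Laplace expansion: each determinant either has a repeated column (coming from the pairing with some $\Omega_k(n)$) or matches the $\Omega$-factor produced by the telescoping, so the contributions cancel in pairs.

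The main obstacle is the book-keeping: contributions from the pairs $(h,j)$ in the definition of $M_h$ interlock with the minors $\Omega_k(n)$ from the expansion of $q_n$, and aligning them with the telescoped differences requires a careful re-indexing of summation variables. Once the appropriate determinantal identity is written down, vanishing is automatic from the repeated columns, so the challenge is purely combinatorial rather than analytic. Finally one must confirm $D_{q,S}\in\A$, which is immediate: $D_p,\D\in\A$ by hypothesis, $\A$ is closed under composition and sums, and the degree bounds on the polynomial coefficients in (\ref{algdiffc}) are preserved by the formula (\ref{Dq}), so the operator defined there is indeed of the required form.
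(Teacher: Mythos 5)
The paper itself contains no proof of this theorem: it is quoted as the particular case of Theorem 3.2 of \cite{ddI} (a single $\D$-operator), and the proof given there is precisely the eigenbasis verification you propose, so your overall strategy is the right one. Your reduction is also set up correctly: expanding \eqref{qus} along its first row does give $q_n=\sum_{k=0}^{m}\xi_{n,k}\Omega_k(n)p_{n-k}$ with no stray signs, and since polynomials in $D_p$ act diagonally while $\D$ acts lower-triangularly, the theorem is equivalent to the family of scalar identities
\begin{equation*}
\sum_{h=1}^{m}M_h(n-i)\sum_{k=0}^{\min(i-1,m)}(-1)^{i-k+1}\Omega_k(n)R_h(n-k)
=\bigl(P_S(n)-P_S(n-i)\bigr)\Omega_i(n),\qquad i\ge 1,
\end{equation*}
where $\Omega_i(n):=0$ for $i>m$ (to get this, one must read the product in \eqref{defTo} as $\varepsilon_n\cdots\varepsilon_{n-j+1}=\xi_{n,j}$, as in \cite{du1}; with the extra factor $\varepsilon_{n-j}$ as literally printed, already the case $m=1$ fails). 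Two of your steps are indeed immediate: $D_{q,S}\in\A$ because $\A$ is an algebra containing $D_p$ and $\D$; and for $i>m$ the left-hand side vanishes term by term, since $\sum_{k=0}^{m}(-1)^{k}\Omega_k(n)R_h(n-k)$ is for each $h$ the Laplace expansion of an $(m+1)\times(m+1)$ determinant with two equal rows. This is the only place where your ``repeated column'' mechanism operates in full.

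The genuine gap is the range $1\le i\le m$, which is exactly where the particular definition \eqref{emeiexp} of $M_h$ matters and where all the content of the theorem sits. There the surviving terms do \emph{not} ``cancel in pairs'', nor does each individually match the $\Omega$-factor from the telescoping. Take $m=2$, $i=2$, and write $[a,b]:=\det\begin{pmatrix}R_1(a)&R_1(b)\\ R_2(a)&R_2(b)\end{pmatrix}$, so that $\Omega(n)=[n-1,n-2]$, $\Omega_1(n)=[n,n-2]$, $\Omega_2(n)=[n,n-1]$. Substituting \eqref{emeiexp}, discarding the terms killed by a repeated column, and matching the terms carrying $S(n)$ (these do match directly), the identity that remains --- the coefficient of $S(n-1)$ --- is
\begin{equation*}
[n,n-1]\,[n-2,n-3]-[n,n-2]\,[n-1,n-3]+[n,n-3]\,[n-1,n-2]=0,
\end{equation*}
i.e.\ the three-term Pl\"ucker relation: three generically nonzero products of pairs of minors, no two of which are equal or opposite. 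So the cancellation mechanism you assert is insufficient; what actually has to be proved, for every $m$ and every $1\le i\le m$, is a family of quadratic (Pl\"ucker/Sylvester-type) identities among the $m\times m$ minors of the matrix $\bigl(R_l(n-j)\bigr)_{l,j}$, and this determinantal work is essentially the entire proof of the theorem. Your proposal subsumes it under ``book-keeping'' and leaves it unproved, so as it stands it is a correct plan rather than a proof; to complete it you must formulate and establish these identities (for instance by double Laplace expansion and induction on $i$, as is done in \cite{du1} and \cite{ddI}).
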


\bigskip
When $D_p$ is a differential operator, the polynomial $P_S$ (see \eqref{Pgs}) will give the order of the differential operator $D_{q,S}$ \eqref{Dq}.
This is a consequence of the following three lemmas. The first lemma gives the degrees of the polynomials $\Omega $ and $M_h$ and it is a particular case of Lemmas 3.4 and 3.5 in \cite{ddI} (when $m_2=0$).

\begin{lemma}\label{lgp1} For a nonnegative integer $m$,  let $R_1, R_2, \ldots, R_m,$ be non null polynomials satisfying that $\deg R_i\not =\deg R_{j}$, for $i\not =j$. Let $l_1, \ldots , l_m,$ be numbers satisfying that $l_i\not =l_j$, $i\not =j$. Write also $U_j(x)$, $j=1,\ldots , m+1$, for the matrix with row $(U_j(x))_l$ equal to
$$
(U_j(x))_l=(R_l(x-r))_{{\scriptsize
r=1-j,2-j,\ldots,m+1-j, r\not =0}}.
$$
We then have
\begin{align}
\label{fL1} \deg \left(\det (R_i(x-l_j))_{i,j=1}^m\right) &=\left(\sum_{i=1}^m\deg R_i\right) -\binom{m}{2},\\
\label{fL2} \deg \left(\sum _{j=1}^{m+1} (-1)^{j+1}\det U_j(x)\right) &\le \left(\sum_{i=1}^m\deg R_i\right)-\binom{m+1}{2}.
\end{align}
\end{lemma}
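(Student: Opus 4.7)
The plan is to prove \eqref{fL1} by a direct Cauchy--Binet computation and then to deduce \eqref{fL2} from \eqref{fL1} by combining a bordered $(m+1)\times(m+1)$ determinant with a finite-difference rearrangement.

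\textbf{Proof of \eqref{fL1}.} Taylor expanding $R_i(x-l_j)=\sum_{k\ge 0}\tfrac{R_i^{(k)}(x)}{k!}(-l_j)^k$ factors the matrix as $A(x)B$, where $A_{i,k}(x)=R_i^{(k)}(x)/k!$ and $B_{k,j}=(-l_j)^k$ (with $k=0,1,\ldots,N$ for some $N\ge\max d_i$). Cauchy--Binet writes the determinant as $\sum_K\det A(x)|_K\cdot\det B|_K$ over $m$-subsets $K=\{k_1<\cdots<k_m\}\subseteq\{0,1,\ldots,N\}$. The $x$-degree of $\det A(x)|_K$ is $\sum_i d_i-\sum_s k_s$, so only $K=\{0,1,\ldots,m-1\}$ achieves the top degree $\sum d_i-\binom{m}{2}$. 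For that $K$, $\det B|_K=(-1)^{\binom{m}{2}}\prod_{i<j}(l_j-l_i)$ is nonzero by hypothesis, and the leading coefficient of $\det A(x)|_K$ equals $\prod a_i\cdot\det(\binom{d_i}{s-1})_{i,s=1}^m=\prod a_i\cdot V(d_1,\ldots,d_m)\big/\prod_{s=0}^{m-1}s!$, nonzero by the distinctness of the $d_i$.

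\textbf{Proof of \eqref{fL2}.} The substitution $r'=r+j-1$ identifies $U_j(x)=M_j(x+j-1)$, where $M_j(y)$ is the $(1,j)$-minor of the bordered $(m+1)\times(m+1)$ matrix
\[
\bar V(y)=\begin{pmatrix}1&1&\cdots&1\\ R_1(y)&R_1(y-1)&\cdots&R_1(y-m)\\ \vdots& & &\vdots\\ R_m(y)&R_m(y-1)&\cdots&R_m(y-m)\end{pmatrix}.
\]
Cofactor expansion along the top row yields $\det\bar V(y)=\sum_{j=1}^{m+1}(-1)^{j+1}\det M_j(y)$, and \eqref{fL1} bounds $\deg\det\bar V(y)\le\sum d_l-\binom{m+1}{2}$. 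The difficulty, however, is that
\[
F(x):=\sum_{j=1}^{m+1}(-1)^{j+1}\det U_j(x)=\sum_{j=1}^{m+1}(-1)^{j+1}\det M_j(x+j-1)
\]
evaluates each minor at a \emph{different} shifted point $x+j-1$, so $F(x)\ne\det\bar V(x)$ in general.

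To absorb these shifts I apply Newton's identity $\det M_j(x+j-1)=\sum_{k\ge 0}\binom{j-1}{k}\Delta^k\det M_j(x)$ and rearrange to $F(x)=\sum_{k\ge 0}\Delta^k G_k(x)$ with $G_k(x):=\sum_j(-1)^{j+1}\binom{j-1}{k}\det M_j(x)$. Cofactor expansion identifies $G_k$ with the $(m+1)\times(m+1)$ determinant obtained from $\bar V(x)$ by replacing its top row by $(\binom{0}{k},\binom{1}{k},\ldots,\binom{m}{k})$. Setting $u=x-l$ and expanding $\binom{l}{k}=\binom{x-u}{k}=\sum_{s=0}^k c_s(x)u^s$ with $\deg c_s\le k-s$, I split $G_k=\sum_s c_s(x)\det H_s(x)$, where $H_s(x)$ has top row $((x-l)^s)_{l=0}^m$, i.e.\ $R_0(x-l)$ with $R_0(z)=z^s$ of degree $s$. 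Applying \eqref{fL1} to $H_s(x)$ (after a preliminary row operation if $s$ coincides with some $d_l$, which only strengthens the bound) yields $\deg H_s\le s+\sum d_l-\binom{m+1}{2}$, so $\deg G_k\le k+\sum d_l-\binom{m+1}{2}$. Since $\Delta^k$ drops the degree by $k$, each $\Delta^k G_k$ has degree at most $\sum d_l-\binom{m+1}{2}$, establishing \eqref{fL2}. The principal obstacle is thus the per-term shift in $F$, cleanly resolved by this Newton expansion.
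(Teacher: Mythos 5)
Your proof is correct, but there is no internal argument in the paper to compare it with: the authors do not prove this lemma at all, they quote it as the particular case ($m_2=0$) of Lemmas 3.4 and 3.5 of \cite{ddI}. So any complete proof is necessarily a ``different route,'' and yours is a sound, self-contained one. For \eqref{fL1}, the Taylor/Cauchy--Binet factorization $(R_i(x-l_j))_{i,j}=A(x)B$, with $A_{i,k}=R_i^{(k)}(x)/k!$ and $B_{k,j}=(-l_j)^k$, correctly isolates $K=\{0,1,\ldots,m-1\}$ as the unique column set of maximal degree; its contribution is $\prod_i a_i$ times the binomial determinant $\det\bigl(\tbinom{d_i}{k}\bigr)$ (a nonzero multiple of the Vandermonde in the distinct degrees $d_i$) times the Vandermonde in the distinct nodes $l_j$, so equality holds, and as a bonus you get the leading coefficient of the Casorati determinant explicitly. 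Note also that the inequality half of this argument uses no distinctness of degrees, which is precisely what legitimizes your later applications of \eqref{fL1} as an upper bound to bordered matrices with possibly repeated degrees. For \eqref{fL2} you identify the genuine obstacle correctly: $\det U_j(x)=\det M_j(x+j-1)$ is the $(1,j)$-minor of the bordered matrix $\bar V$ evaluated at a $j$-dependent shift, so the alternating sum is not $\det\bar V(x)$. Your resolution via Newton's formula $P(x+n)=\sum_k\binom{n}{k}\Delta^kP(x)$, regrouping $F=\sum_k\Delta^kG_k$ where $G_k$ is again a bordered Casorati determinant with top row $\bigl(\tbinom{r}{k}\bigr)_{r=0}^{m}$, then splitting $\tbinom{x-u}{k}=\sum_s c_s(x)u^s$ with $\deg c_s\le k-s$ to get $\deg G_k\le k+\sum_l d_l-\binom{m+1}{2}$, and finally using that $\Delta^k$ lowers degree by $k$, is complete and correct; I checked the bookkeeping (the column reindexing $r'=r+j-1$, the cofactor signs, the finiteness of the sums, and the row-operation patch when $s$ coincides with some $d_l$). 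In terms of trade-offs: the paper's citation buys brevity and inherits the more general statement proved in \cite{ddI}, while your argument buys a fully elementary, self-contained proof that additionally exhibits the exact leading coefficient in \eqref{fL1}, information the mere degree statement does not record.
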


In order to enunciate the next lemma we need some notation.
Given $m$ arbitrary polynomials $R_1,\ldots , R_m$, we will denote by $\cR $ the $m$-tuple of polynomials $(R_1,\ldots, R_m)$. The $m$-tuple formed by interchanging the polynomials $R_i$ and $R_j$ in $\cR$ is denoted by $\cR_{i\leftrightarrow j}$; the $m$-tuple formed by changing the polynomial $R_i$ to $aR_i+bR_j$ in $\cR$, where $a$ and $b$ are real numbers, is denoted by $\cR_{i\leftrightarrow ai+bj}$; and the $m$-tuple formed by removing the polynomial $R_i$ in $\cR$ is denoted by $\cR_{\{ i\} }$.

\begin{lemma}
Given $m$ arbitrary polynomials $R_1,\ldots , R_m$, we form the $m$-tuple of polynomials
$\cR =(R_1,\ldots, R_m)$ and consider the  operator $D_{q,S}=D_{q,S}(\cR )$ \eqref{Dq}. Then, for any numbers $a, b\in\mathbb{R}$ we have
\begin{align} \label{Dqinv1}
D_{q,S}(\cR )&=-D_{q,S}(\cR_{i\leftrightarrow j}),\\\label{Dqinv2}
D_{q,S}(\cR_{i\leftrightarrow ai+bj})&=aD_{q,S}(\cR).
\end{align}
\end{lemma}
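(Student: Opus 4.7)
I would prove both identities by direct substitution into the explicit formula \eqref{Dq}, namely $D_{q,S}(\cR)=P_S(D_p)+\sum_{h=1}^m M_h(D_p)\D R_h(D_p)$, and by tracking how each of the three ingredients $P_S$, $M_h$, and $R_h$ transforms under a swap of two entries of $\cR$, and under replacing $R_i$ by $aR_i+bR_j$. Both $\Omega$ and the $M_h$'s are determinants in rows indexed by $l$, so every piece of the calculation reduces to standard row operations combined with careful sign bookkeeping.

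For \eqref{Dqinv1}, set $\tilde\cR=\cR_{i\leftrightarrow j}$ and decorate the corresponding quantities with tildes. Exchanging rows $i$ and $j$ in $\Omega$ yields $\tilde\Omega=-\Omega$, hence $\tilde P_S=-P_S$ by \eqref{Pgs}. For $h\notin\{i,j\}$ the determinant defining $M_h$ still contains both rows $i$ and $j$, which are swapped, so $\tilde M_h=-M_h$; since $\tilde R_h=R_h$, the $h$-th summand of $D_{q,S}$ acquires a minus sign. The delicate case is $h\in\{i,j\}$: in $\tilde M_i$ the row $l=i$ is removed but row $l=j$ now carries $R_i$, while in $M_j$ the row $l=j$ is removed and row $l=i$ carries $R_i$. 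Assuming $i<j$, moving $R_i$ from position $i$ to position $j-1$ in the resulting $(m-1)$-row list costs $j-i-1$ adjacent transpositions, giving a determinantal sign $(-1)^{j-i-1}$; combined with the change of the prefactor from $(-1)^{i+k}$ to $(-1)^{j+k}$ in \eqref{emeiexp}, one obtains $\tilde M_i=-M_j$ and, by symmetry, $\tilde M_j=-M_i$. Summing,
\begin{align*}
D_{q,S}(\tilde\cR) &= -P_S(D_p)+\sum_{h\neq i,j}(-M_h(D_p))\D R_h(D_p)\\
&\quad +(-M_j(D_p))\D R_j(D_p)+(-M_i(D_p))\D R_i(D_p) = -D_{q,S}(\cR),
\end{align*}
which is \eqref{Dqinv1}.

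For \eqref{Dqinv2}, set $\tilde\cR=\cR_{i\leftrightarrow ai+bj}$, so that $\tilde R_i=aR_i+bR_j$ and $\tilde R_h=R_h$ for $h\neq i$. Row $i$ of the matrix defining $\Omega$ is replaced by $a$ times itself plus $b$ times row $j$, so $\tilde\Omega=a\Omega$ and $\tilde P_S=aP_S$. For $h\notin\{i,j\}$, multilinearity of the determinant in row $i$ gives $\tilde M_h=aM_h+b\cdot(\text{determinant with }R_j\text{ in both rows }i\text{ and }j)=aM_h$. For $h=i$ row $i$ is absent from the determinant, whence $\tilde M_i=M_i$. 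For $h=j$ multilinearity yields $\tilde M_j=aM_j+bM_j^{(\mathrm{aux})}$, where $M_j^{(\mathrm{aux})}$ is built from the same polynomials as $M_j$ except that $R_j$ occupies the position formerly held by $R_i$; the same permutation count as in the swap case identifies $M_j^{(\mathrm{aux})}=-M_i$, hence $\tilde M_j=aM_j-bM_i$. Plugging in,
\begin{align*}
D_{q,S}(\tilde\cR) &= aP_S(D_p)+\sum_{h\neq i,j}aM_h(D_p)\D R_h(D_p)\\
&\quad + M_i(D_p)\D(aR_i+bR_j)(D_p)+(aM_j(D_p)-bM_i(D_p))\D R_j(D_p),
\end{align*}
and the cross term $bM_i(D_p)\D R_j(D_p)$ coming from expanding the $h=i$ summand cancels the term $-bM_i(D_p)\D R_j(D_p)$ coming from the $h=j$ summand, leaving exactly $aD_{q,S}(\cR)$.

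The main obstacle is the permutation-sign bookkeeping in the off-diagonal cases ($h\in\{i,j\}$ in the first identity and $h=j$ in the second), where rows of one $M_h$ determinant must be aligned with those of a differently-indexed $M_{h'}$ determinant and balanced against the $(-1)^{h+k}$ prefactor in \eqref{emeiexp}; everything else reduces to elementary multilinearity and antisymmetry of determinants.
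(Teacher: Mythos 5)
Your proposal is correct and follows essentially the same route as the paper: both decompose $D_{q,S}$ via \eqref{Dq} into $P_S$ and the $M_h$ terms, derive $\tilde P_S=\pm P_S$ or $aP_S$ from the behavior of $\Omega$, and establish the transformation rules $\tilde M_i=-M_j$, $\tilde M_j=-M_i$ (swap case) and $\tilde M_i=M_i$, $\tilde M_j=aM_j-bM_i$ (linear-combination case) by multilinearity, antisymmetry, and the $(-1)^{j-i-1}$ row-transposition count balanced against the $(-1)^{h+u}$ prefactor in \eqref{emeiexp}. Your write-up actually makes explicit the sign bookkeeping that the paper delegates to the auxiliary determinants $V_{h,u}$ and labels as straightforward, but the underlying argument is identical.
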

\begin{proof}
The dependence of $D_{q,S}$ in terms of $\cR =(R_1,\ldots , R_m)$ (see (\ref{Dq})) comes in terms of the polynomials $P_S$ and $M_h, h=1,\ldots,m$. It is easy to see, using $P_S(x)-P_S(x-1)=S(x)\Omega(x)$, that $P_S=P_S(\cR )$ satisfy the properties \eqref{Dqinv1} and \eqref{Dqinv2}, since $S$ is independent of $\cR $. For the polynomials $M_h=M_h(\cR )$ we have that
\begin{align}\label{Mmpro1}
M_h(\cR )&=-M_h(\cR_{i\leftrightarrow j}),\quad\mbox{if}\quad h\neq i,j\\
\nonumber M_i(\cR )&=-M_j(\cR_{i\leftrightarrow j}).
\end{align}
and
\begin{align}\label{Mmpro2}
M_h(\cR_{i\leftrightarrow ai+bj})&=aM_h(\cR ),\quad\mbox{if}\quad h\neq i,j\\
\nonumber M_i(\cR_{i\leftrightarrow ai+bj})&=M_i(\cR ),\\
\nonumber M_j(\cR_{i\leftrightarrow ai+bj})&=aM_j(\cR )-bM_i(\cR ).
\end{align}
These properties are a consequence of writing $M_h(\cR )$ as (see \eqref{emeiexp})
\begin{equation}\label{Mmh}
M_h(\cR )=\sum_{u=1}^m(-1)^{h+u}S(x+u)V_{h,u}(\cR _{\{ h\} }),
\end{equation}
where
$$
V_{h,u}(\cR _{\{ h\} })=\det\big(R_l(x-r)\big)_{\scriptsize \left\{\begin{array}{l}
                                                           l\in\{1,2,\ldots,m\}\setminus\{h\} \\
                                                            r\in\{-u+1,-u+2,\ldots,m-u\}\setminus\{0\}
                                                          \end{array}\right\}
}.
$$
Observe that $V_{h,u}$ is independent of the polynomial $R_h$. Now, it is straightforward to see from the definition of $V_{h,u}$, that
\begin{align*}
V_{h,u}(\cR_{\{ h\} })&=-V_{h,u}((\cR_{i\leftrightarrow j})_{\{ h\} }),\quad\mbox{if}\quad h\neq i,j\\
V_{i,u}(\cR_{\{ i\} })&=(-1)^{j-i-1}V_{j,u}((\cR_{i\leftrightarrow j})_{\{ j\} })\\
V_{j,u}(\cR_{\{ j\} })&=(-1)^{j-i-1}V_{i,u}((\cR_{i\leftrightarrow j})_{\{ i\} })
\end{align*}
and
\begin{align*}
V_{h,u}((\cR_{i\leftrightarrow ai+bj})_{\{ h\} })&=aV_{h,u}(\cR_{\{ h\} }),\quad\mbox{if}\quad h\neq i,j\\
V_{i,u}((\cR_{i\leftrightarrow ai+bj})_{\{ i\} })&=V_{i,u}(\cR_{\{ i\} }),\\
V_{j,u}((\cR_{i\leftrightarrow ai+bj})_{\{ j\} })&=aV_{j,u}(\cR_{\{ j\} })+b(-1)^{j-i-1}V_{i,u}(\cR_{\{ i\} }).
\end{align*}
Using the properties above in \eqref{Mmh} we obtain \eqref{Mmpro1} and \eqref{Mmpro2}.

From the definition of $D_{q,S}$ in (\ref{Dq}) and using the properties above for $P_S$ and $M_h, h=1,\ldots,m,$ it is now straightforward to see that \eqref{Dqinv1} and \eqref{Dqinv2} hold.

\end{proof}

We finally show the lemma that gives the order of the differential operator $D_{q,S}$ \eqref{Dq}.

\begin{lemma}\label{l3.5} In  Theorem \ref{Teor1}, assume in addition that $\A$ is the algebra of differential operators (\ref{algdiffc}),  the differential operator $D_p$ has order $r$ and the $\D$-operator $\D$ (which it is now also differential) has order $s$ with $s<r$. Then the order of the operator $D_{q,S}$ is $r(\deg S+\deg \Omega +1)$.
\end{lemma}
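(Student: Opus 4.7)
The plan is to analyze the order of each summand in the decomposition
$$D_{q,S}=P_S(D_p)+\sum_{h=1}^m M_h(D_p)\D R_h(D_p)$$
given by \eqref{Dq} and show that $P_S(D_p)$ strictly dominates. Since the order of a composition of operators in $\A$ is additive and $D_p$ has order $r$, the polynomial-in-$D_p$ operator $P_S(D_p)$ has order $r\deg P_S$. The defining relation \eqref{Pgs} forces $P_S(x)-P_S(x-1)$ to have degree $\deg P_S-1$, so comparing with $S(x)\Omega(x)$ gives $\deg P_S=\deg S+\deg\Omega+1$. Hence $P_S(D_p)$ has order exactly $r(\deg S+\deg\Omega+1)$, which matches the claimed order.

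It remains to show that each $M_h(D_p)\D R_h(D_p)$ has strictly smaller order. Its order is at most $r\deg M_h+s+r\deg R_h$, and since $s<r$ the strict inequality reduces to the polynomial-degree inequality $\deg M_h+\deg R_h\le\deg S+\deg\Omega$; invoking \eqref{fL1} to expand $\deg\Omega$, this is equivalent to
$$\deg M_h\le \deg S+\sum_{l\ne h}\deg R_l-\binom{m}{2}.$$

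Establishing this degree bound on $M_h$ is the main obstacle. From \eqref{emeiexp}, $M_h(x)=\sum_{u=1}^m(-1)^{h+u}S(x+u)V_{h,u}(x)$, with each $V_{h,u}$ an $(m-1)\times(m-1)$ Casorati-type minor. By \eqref{fL1}, $\deg V_{h,u}\le\sum_{l\ne h}\deg R_l-\binom{m-1}{2}$, so the naive pointwise bound on $M_h$ exceeds the target by exactly $m-1$. This excess must cancel in the alternating sum. When $S$ is constant the cancellation follows at once from \eqref{fL2} with $m$ replaced by $m-1$, which yields a drop of $\binom{m}{2}-\binom{m-1}{2}=m-1$. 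For general $S$ I would expand
$$S(x+u)=\sum_{i=0}^{\deg S}\frac{S^{(i)}(x)}{i!}u^i,$$
reducing the problem to showing that for each $i$ the inner sum $\sum_{u=1}^m(-1)^{h+u}u^iV_{h,u}(x)$ still has degree at most $\sum_{l\ne h}\deg R_l-\binom{m}{2}$. The factor $u^i$ is to be absorbed via column manipulations on $V_{h,u}$ (whose columns are indexed by a $u$-dependent window of consecutive integers with the column $r=0$ removed), allowing an iterated application of \eqref{fL2}. This combinatorial cancellation is the one genuinely technical step.

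Combining,
$$r\deg M_h+s+r\deg R_h\le r(\deg S+\deg\Omega)+s<r(\deg S+\deg\Omega+1),$$
so $P_S(D_p)$ is the unique summand of maximal order and $D_{q,S}$ has order exactly $r(\deg S+\deg\Omega+1)$, as claimed.
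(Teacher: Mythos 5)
Your treatment of the first summand is correct and agrees with the paper: $\deg P_S=\deg S+\deg\Omega+1$ follows from \eqref{Pgs}, so $P_S(D_p)$ has order exactly $r(\deg S+\deg\Omega+1)$. But the second half of your plan breaks at its very first step: the claim that \emph{each} $M_h(D_p)\D R_h(D_p)$ has strictly smaller order is false in general, because \eqref{fL1} --- which you invoke to rewrite $\deg\Omega$ as $\sum_l\deg R_l-\binom{m}{2}$ --- requires the $R_l$ to have pairwise \emph{distinct} degrees, and Theorem \ref{Teor1} does not grant that (indeed, in this paper's application in Lemma \ref{Lemawr} the polynomials $\mathcal R_l$ typically share degrees). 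When degrees coincide, $\deg\Omega$ can be strictly smaller than $\sum_l\deg R_l-\binom{m}{2}$, and then individual summands of $T_2$ can have order \emph{exceeding} that of $P_S(D_p)$. Concretely, take $m=2$, $S=1$, $R_1=x^2+1$, $R_2=x^2$: then $\Omega(x)=(x-2)^2-(x-1)^2=-2x+3$ (nonzero at every integer $n\ge0$), so $P_S(D_p)$ has order $2r$; but $M_1=R_2(x-1)-R_2(x+1)=-4x$ and $M_2=R_1(x+1)-R_1(x-1)=4x$, so each $M_h(D_p)\D R_h(D_p)$ has order $3r+s>2r$. The needed cancellation happens only \emph{across} the sum over $h$: here $\sum_h M_h(D_p)\D R_h(D_p)=-4D_p\D$, of order $r+s$. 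The paper's proof supplies exactly the ingredient you are missing: by the invariance properties \eqref{Dqinv1} and \eqref{Dqinv2} of the preceding lemma, row operations on the tuple $\cR=(R_1,\ldots,R_m)$ change $P_S$ and $D_{q,S}$ only by nonzero scalar factors, so one may first replace $\cR$ by a tuple $\tilde\cR$ of polynomials with pairwise distinct degrees, and only then carry out the summand-by-summand degree count. Without this normalization your argument is not merely incomplete; it tries to bound quantities that can genuinely be too large.

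There is a second, smaller flaw: even in the distinct-degree case, your Taylor-expansion reduction targets a false statement. For $i\ge1$ the inner sum $\sum_{u=1}^m(-1)^{h+u}u^iV_{h,u}(x)$ need not have degree at most $\sum_{l\ne h}\deg R_l-\binom{m}{2}$; already for $m=2$, $h=1$, $i=1$ it equals $R_2(x-1)-2R_2(x+1)$, which has full degree $\deg R_2$ rather than $\deg R_2-1$. What rescues the idea is that the coefficient $S^{(i)}(x)/i!$ has degree $\deg S-i$, so the $i$-th inner sum only needs to be bounded by $\sum_{l\ne h}\deg R_l-\binom{m}{2}+\min(i,m-1)$; with that corrected bookkeeping the cancellation argument could be pushed through. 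The paper avoids this entirely by quoting Lemma \ref{lgp1} (that is, Lemmas 3.4 and 3.5 of \cite{ddI}) to obtain $\deg\tilde M_h\le\deg S+\bigl(\sum_{i\ne h}\deg\tilde R_i\bigr)-\frac{m(m-1)}{2}$ directly; note, incidentally, that \eqref{fL2} as stated here contains no $S(x+j)$ factors, so the version actually used is the fuller one from \cite{ddI}.
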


\begin{proof}
Indeed, the operator $D_{q,S}$ (\ref{Dq}) is the sum of the operators $T_1=P_S(D_{p})$ and $T_2=\sum_{h=1}^mM_h(D_{p})\D R_h(D_{p})$. Since the order of the differential operator $D_{p}$ is $r$, it is clear from the definition of the polynomial $P_S$ that the order of $P_S(D_{p})$ is just $r(\deg S+\deg \Omega +1)$. It is now enough to prove that the order of the operator $T_2$ is less than the order of $T_1$.

To stress the dependence of the polynomials $P_S$, $M_h$ (\ref{emeiexp}) and the operator $D_{q,S}$ on the $m$-tuple of polynomials
$\cR =(R_1,\ldots ,R_m)$, we write  $P_S=P_S(\cR )$, $M_h=M_h(\cR )$ and $D_{q,S}=D_{q,S}(\cR )$.
Using the invariance properties (\ref{Dqinv1}) and (\ref{Dqinv2}), we can get from the polynomials $R_i$, $i=1,\ldots , m$, new polynomials $\tilde R_i$, $i=1,\ldots , m$, satisfying that $\deg \tilde R_i\not =\deg \tilde R_{j}$, $i\not =j$ and
\begin{align*}
P_S&=P_S(\cR )=P_S(\tilde \cR ),\\
D_{q,S}&=D_{q,S}(\cR )=D_{q,S}(\tilde \cR ),
\end{align*}
where $\tilde \cR =(\tilde R_1,\ldots ,\tilde R_m)$.
If we write $\tilde M_h=M_h(\tilde \cR )$, $h=1,\ldots , m$, we then have
\begin{equation}\label{ot2}
T_2=\sum_{h=1}^m\tilde M_h(D_{p})\D \tilde R_h(D_{p}).
\end{equation}

Using \eqref{fL1}, we then get that the degree of the polynomial $P_S=P_S(\tilde \cR )$ is $\deg S+\left(\sum_{i=1}^m\deg \tilde R_i\right) -\binom{m}{2}+1$. This gives that the order of the operator $T_1$ is $r\deg S+r\left(\sum_{i=1}^m\deg \tilde R_i\right) -r\binom{m}{2}+r$.

A straightforward computation using (\ref{ot2}) shows that the order of the operator $T_2$ is less than or equal to
$$
\max \{ r\deg \tilde M_h+r\deg \tilde R_h +s, h=1,\ldots ,m \}.
$$
Using now \eqref{fL2}, we get that the degree of the polynomial $\tilde M_h$ is less than or equal to
$$
\deg S+\left(\sum_{i=1,i\not =h}^m\deg \tilde R_i\right) -\frac{m(m-1)}{2}.
$$
Hence for $h=1,\ldots , m$, we have
$$
\deg \tilde M_h+\deg \tilde R_h\le \left(\sum_{i=1}^m\deg \tilde R_i\right) -\frac{m(m-1)}{2}+\deg S.
$$
This gives that the order of the operator $T_2$ is less than or equal to
$r\deg S+r\left(\sum_{i=1}^m\deg \tilde R_i\right) -r\binom{m}{2}+s$. Since $s<r$, we conclude that the order of the differential operator $T_2$ is less than the order of $T_1$. This completes the proof of the Lemma.
\end{proof}

\section{Discrete Laguerre-Sobolev orthogonal polynomials}

In this Section we prove Theorems \ref{mainth1} and \ref{mainth2}
of the Introduction.

\begin{proof} of Theorem \ref{mainth1}

The Theorem is a direct consequence of Lemma \ref{LemLagSob}.
Indeed, we have just to identify who the main characters are in
this case. Since $\lambda=0$ and $\nu(x)=\mu_{\alpha-m}(x)$, in
our case, we have $x^m\nu=\mu_{\alpha}$ and hence
$p_n=L_n^{\alpha}$, where as above $\mu _\alpha$ denotes the orthogonalizing weight for the Laguerre polynomials $(L_n^{\alpha})_n$. Using that
$L_n^{\alpha}(0)=\binom{n+\alpha}{n}$ and \eqref{Lagder} we get
$\left(L_n^{\alpha}\right)^{(i)}(0)=(-1)^i\binom{n+\alpha}{n-i}$.
Also, using \eqref{Lagab} for $\beta=\alpha-m+l$ we get
$w_{n,l}=\frac{\Gamma (\alpha -m+l)}{(m-l)!}(n+1)_{m-l}$.  The
expressions (\ref{RdSip}) and (\ref{quss}) give then
(\ref{iRdSip}) and (\ref{iquss}), respectively.

\end{proof}

\begin{proof} of Theorem \ref{mainth2}

On the one hand, when $\alpha$ is a positive integer with $\alpha \ge
m$, the functions $\mathcal R_l$ in Theorem \ref{mainth1} are
actually polynomials (see \eqref{iRdSip}). On the other hand the $\D $-operator for the
Laguerre polynomials displayed in the Lemma \ref{lTlag} is defined
from the sequence of numbers $\varepsilon _n=-1$. Hence, the
expression (\ref{iquss}) for the orthogonal polynomials $(q_n)_n$
with respect to the discrete Laguerre Sobolev bilinear form
(\ref{idslip}) fits with the expression (\ref{qus}) in Theorem
\ref{Teor1} for $p_n=L_n^\alpha $, $\varepsilon _n=-1$. Hence we
deduce that for any polynomial $S$, there exists a differential
operator $D_S$  with respect to which the orthogonal polynomials
$(q_n)_n$ (\ref{iquss}) are eigenfunctions. This operator $D_S$ can be explicitly constructed using
(\ref{Dq}), where in this case $D_p$ is the second-order differential operator $D_{\alpha}$ for the Laguerre polynomials defined by \eqref{DopL}.
Moreover, up to an
additive constant, the corresponding eigenvalues $(\lambda_n)_n$
of $D_S$ are $\lambda _n=P_S(n)$, where $P_S$ is the polynomial
defined by the first order difference equation
$P_S(x)-P_S(x-1)=S(x)\Omega (x)$, and $\Omega$ is the Casorati
determinant (\ref{casdet}). Using Lemma \ref{l3.5}, we conclude
that the order of the differential operator $D_S$ is  equal to
$2(\deg \Omega +\deg S +1)$. To finish the proof it is enough to
use Lemma \ref{Lemawr}.

\end{proof}

\begin{lemma}\label{Lemawr}
For $m\ge 1$, let $M=(M_{i,j})_{i,j=0}^{m-1}$ be a $m\times m$
matrix. Let $\alpha$ be a positive integer with $\alpha \ge m$. For
$j=1,\ldots , m$, define the polynomials $\mathcal{R}_j$ by \eqref{iRdSip}.
Then the degree of the Casorati determinant $\Omega (x)$ (see
\eqref{casdet}) is the $\alpha$-weighted rank $\awr (M)$ of $M$
defined in Definition \ref{wrM}.
\end{lemma}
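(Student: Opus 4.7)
The plan is to apply Lemma~\ref{lgp1}: take linear combinations of $\mathcal R_1,\ldots,\mathcal R_m$ (i.e.\ row operations on the Casorati matrix, which do not change $\Omega$) so as to obtain polynomials $\tilde{\mathcal R}_1,\ldots,\tilde{\mathcal R}_m$ with pairwise distinct degrees. Then $\deg\Omega=\sum_l\deg\tilde{\mathcal R}_l-\binom{m}{2}$, and it remains only to identify this sum with $\sum_{j=1}^m n_j+\sum_{j=1}^{m-1}m_j$. Write $\mathcal R_l(x)=a_l(x+1)_{m-l}+(x+1)_\alpha q_l(x)$ with $a_l=\Gamma(\alpha-m+l)/(m-l)!\ne 0$ (since $\alpha\ge m$) and $q_l(x)=(l-1)!\sum_{i=0}^{m-1}\frac{(-1)^iM_{l-1,i}}{\Gamma(\alpha+i+1)}(x-i+1)_i$. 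The bound $\alpha+i\ge m>m-l$ forces the high-degree term to dominate whenever $q_l\ne 0$, so $\deg\mathcal R_l=\alpha+\max\{i:M_{l-1,i}\ne 0\}$ when row $l-1$ of $M$ is nonzero and $\deg\mathcal R_l=m-l$ otherwise.

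Let $V=\langle\mathcal R_1,\ldots,\mathcal R_m\rangle$. Since the first terms $a_l(x+1)_{m-l}$ have pairwise distinct degrees $0,\ldots,m-1$, $\dim V=m$, and its degree set $D(V):=\{\deg f:f\in V\setminus\{0\}\}$ has exactly $m$ elements with $\sum_l\deg\tilde{\mathcal R}_l=\sum D(V)$. Because $\alpha\ge m$, the two ranges of degrees are disjoint and $D(V)=D_{\mathrm{low}}\sqcup D_{\mathrm{high}}$ with $D_{\mathrm{low}}\subset\{0,\ldots,m-1\}$ and $D_{\mathrm{high}}\subset\{\alpha,\ldots,\alpha+m-1\}$. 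The high-degree piece of $\sum_l\beta_l\mathcal R_l$ is $(x+1)_\alpha$ times a polynomial whose coefficient on $(x-i+1)_i$ is proportional to $\sum_l\beta_l(l-1)!M_{l-1,i}$; therefore the set of indices $i$ achievable as the rightmost nonzero coefficient (as $\beta$ varies) equals $\{i:c_{i+1}\not\in\langle c_{i+2},\ldots,c_m\rangle\}$, by duality between columns of $M$ and linear functionals. Setting $j=m-i$ gives $D_{\mathrm{high}}=\{\alpha+m-j:n_j\ne 0\}$ and hence $\sum D_{\mathrm{high}}=\sum_{j=1}^m n_j$.

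For the low part, $\sum_l\beta_l\mathcal R_l\in V$ has degree $\le m-1$ iff its high-degree piece cancels, iff $(\beta_l(l-1)!)_l$ lies in the left null space $K$ of $M$; since $\tilde M$ and $M$ have the same column span, $K$ coincides with the left null space of $\tilde M$. The resulting polynomial $\sum_l\beta_la_l(x+1)_{m-l}$ has leading degree $m-l^*$ with $l^*=\min\{l:\beta_l\ne 0\}$. Comparing $\dim(K\cap\langle e_l,\ldots,e_m\rangle)$ with $\dim(K\cap\langle e_{l+1},\ldots,e_m\rangle)$, expressed in terms of the row rank of the submatrix of $\tilde M$ with rows $f_l,\ldots,f_m$, one finds that $l^*=l$ is achievable precisely when $f_l\in\langle f_{l+1},\ldots,f_m\rangle$. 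To produce the corresponding vectors in $K$ explicitly I invoke Lemma~\ref{cdm} iteratively: starting from an orthonormal basis of $K^\bot$ in $\mathbb R^m$ and processing $l=1,2,\ldots,m$ in order, at each ``good'' index the lemma supplies the next unitary vector of an orthonormal basis of $\mathbb R^m$ that picks up $e_l$; dualizing this construction yields a basis of $K$ whose distinct leading positions are exactly the set of $l$ with $f_l\in\langle f_{l+1},\ldots,f_m\rangle$. Hence $D_{\mathrm{low}}=\{m-l:f_l\in\langle f_{l+1},\ldots,f_m\rangle\}$ and $\sum D_{\mathrm{low}}=\sum_{j=1}^{m-1}m_j$ (the case $l=m$, if it occurs, contributes $m-m=0$). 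Combining both parts and applying Lemma~\ref{lgp1} yields $\deg\Omega=\sum_j n_j+\sum_j m_j-\binom{m}{2}=\awr(M)$. The delicate step is the low-degree bookkeeping, where the $l$-dependent coefficient $a_l(x+1)_{m-l}$ intertwines with the left-null-space condition on $(\beta_l(l-1)!)_l$; Lemma~\ref{cdm} is precisely the tool that organizes the iterative construction of a basis of $K$ adapted to the standard basis.
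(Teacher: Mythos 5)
Your proof is correct, and it reaches the paper's conclusion by a genuinely different route. Both arguments start from the same factorization $\mathcal R_l=a_l(x+1)_{m-l}+(x+1)_\alpha q_l$ and both end by exhibiting a basis of $V=\langle \mathcal R_1,\ldots ,\mathcal R_m\rangle$ with pairwise distinct degrees so that Lemma \ref{lgp1} applies (an invertible recombination of the rows multiplies $\Omega$ by a nonzero constant, so $\deg\Omega$ is unchanged). The difference is how that basis is obtained. The paper is constructive: it takes a QL factorization $M=UT$ with $U$ unitary, proves a Claim in three steps (iterating Lemma \ref{cdm}) that the remaining columns of $U$ can be chosen with prescribed zero patterns, and then explicitly writes the transformed polynomials $\tilde{\mathcal R}_i=q_i(\mathcal R_1,\ldots ,\mathcal R_m)^T$ together with their degrees $n_{l_j}$ and $m_{h_i}$. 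You instead argue abstractly with the degree set $D(V)$: since $\lvert D(V)\rvert=\dim V=m$, it is enough to determine which degrees occur, and you identify them by duality --- the high degrees $\alpha+i$ correspond exactly to columns with $c_{i+1}\notin\langle c_{i+2},\ldots ,c_m\rangle$ (the nonzero $n_j$), while the low degrees $m-l$ correspond, via the comparison of $\dim(K\cap\langle e_l,\ldots ,e_m\rangle)$ with $\dim(K\cap\langle e_{l+1},\ldots ,e_m\rangle)$ for the common left null space $K$ of $M$ and $\tilde M$, exactly to rows with $f_l\in\langle f_{l+1},\ldots ,f_m\rangle$ (the nonzero $m_j$, with $l=m$ contributing degree $0$). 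Your version is shorter and dispenses entirely with orthonormality, which is indeed irrelevant to a degree count; what the paper's heavier machinery buys is an explicit orthogonal matrix $U$ and explicit distinct-degree polynomials $\tilde{\mathcal R}_i$, in line with the paper's constructive emphasis, though nothing downstream in the paper actually requires that explicitness. One remark: your closing invocation of Lemma \ref{cdm} (``dualizing this construction yields a basis of $K$\dots'') is both the vaguest step and a redundant one --- the preceding dimension comparison already determines the achievable leading positions in $K$, and choosing one element of $V$ per achievable degree yields the required distinct-degree basis --- so that sentence can simply be deleted without weakening the argument.
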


\begin{proof}
Observe that the functions $\mathcal{R}_j, j=1,\ldots , m,$ are indeed polynomials since $\alpha$ is a positive integer. We can not apply the first  part of Lemma \ref{lgp1} yet, because in general, some of the polynomials $\mathcal{R}_j$ can have equal degrees. To avoid this problem we will transform the polynomials $\mathcal{R}_j$ in a suitable way.

We first factorize the polynomials $\mathcal{R}_j$ in the form
\begin{equation}\label{fact1}
\mathcal{R}_j(x)=s_j(x)+(j-1)!(x+1)_\alpha r_j(x),
\end{equation}
where
\begin{align*}
r_j(x)&=\sum_{i=0}^{m-1}\frac{(-1)^iM_{j-1,i}}{(\alpha +i)!}(x-i+1)_i,\\
s_j(x)&=\frac{(\alpha -j+j-1)!}{(m-j)!}(x+1)_{m-j}.
\end{align*}
They satisfy that $\deg r_j\le m-1$, $\deg s_j= m-j\le m-1$, $j=1,\ldots , m$. Since $\alpha\ge m$, we also get $\deg ((x-\alpha)_\alpha r_j(x))\ge \deg s_j$.

From the definition of $r_j, j=1,\ldots,m,$ it is clear that
\begin{equation}\label{fact2}
\begin{pmatrix}
  r_1\\
  r_2\\
  \vdots \\
  r_m\\
\end{pmatrix}=MT_1\begin{pmatrix}
  1 \\
  x \\
  \vdots \\
  x^{m-1}\\
\end{pmatrix},
\end{equation}
where $T_1=(T_{1,i,j})_{i,j=0}^{m-1}$ is a nonsingular lower triangular matrix with entry $T_{1,i,j}$ equal to the coefficient of $x^{j}$ in the power expansion of $(-1)^i(x-i+1)_i/(\alpha +i)!$.

We next find a convenient factorization of $M$ which it will allow us to calculate the degrees of the polynomials $r_j, s_j, j=1,\ldots,m$, in terms of the $\alpha$-weighted rank of the matrix $M$. For that, write $X=\{j:n_j\not =0\}$, where $n_j$, $j=1,\ldots , m$, are the numbers defined by (\ref{nj}). For certain $s$, $1\le s\le m$, we can write $X=\{l_{m-s+1},\ldots , l_m\}$ with $l_i<l_{i+1}$, and then $n_{l_i}=\alpha+m-l_i$.

Applying the QL factorization (see \cite{HJ}, pp. 112-113) to $M$, we find $q_{m-s+1},\ldots ,q_m,$ unitary orthogonal vectors in $\RR ^m$, and vectors $v_{m-s+1},\ldots, v_m\in \RR ^m$ with
\begin{equation}\label{propv}
v_{m-i+1,m-l_{m-s+i}+1}\not =0,\quad v_{m-i+1,j}=0,
\end{equation}
for $j=m-l_{m-s+i}+2,\ldots , m$ and $i=1,\ldots ,s$, such that
\begin{equation}\label{qrtM}
M=(q_{m-s+1}^t \ldots q_m^t)\begin{pmatrix}v_{m-s+1}\\ \vdots \\ v_m\end{pmatrix}.
\end{equation}
In addition to the numbers $m_j, j=1,\ldots,m-1$, defined by \eqref{mj}, we also define the number $m_m$ by $m_m=0$ if $f_m\not=0$ and $m_m=1$ if $f_m=0$ (we are using here the notation of Definition \ref{wrM}). Write now $Y=\{j:m_j =0\}$ and $Z=\{j:m_j \not =0\}$. Since $M$ and $\tilde M$ have equal rank (see Definition \ref{wrM}), we have $|X| =|Y|$. We can then write
$Y=\{g_{m-s+1},\ldots , g_m\}$ and $Z=\{h_{1},\ldots , h_{m-s}\}$, with $g_i<g_{i+1}$,
$h_i<h_{i+1}$ and $m_{h_i}=m-h_i$. As a consequence of the definition of the numbers $m_j$,  we have
$f_{h_l}\in \langle f_{h_l+1},\ldots , f_m\rangle$ and $\langle f_{h_l+1},\ldots , f_m\rangle=\langle f_u: u\in Y, u>h_l\rangle$; hence
\begin{equation}\label{partt2}
\mbox{$f_{h_l}\in \langle f_u: u\in Y, u>h_l\rangle$, \quad $l=1,\ldots, m-s$.}
\end{equation}
We finally remark that
\begin{align}\label{partt1}
&\mbox{the finite sets $Y=\{g_{m-s+1},\ldots , g_m\}$ and $Z=\{h_{1},\ldots , h_{m-s}\}$} \\ \nonumber &\mbox{form a partition of $\{1,2,\ldots, m\}$.}
\end{align}
We now assume the following claim.

\noindent
\textsl{Claim.} There exist unitary orthogonal vectors $q_1,\ldots, q_{m-s}$ such that:
\begin{enumerate}
\item $q_1,\ldots, q_{m}$ form a unitary orthogonal basis of $\RR ^m$;
\item for  $\mu$, $1\le \mu \le m-s$, and $\nu$, $1\le \nu <h_\mu$, we have that
\begin{equation}\label{en1}
q_{\mu,h_\mu}\not =0\quad \mbox{and} \quad q_{\mu,\nu}=0.
\end{equation}
\end{enumerate}

Write now  $U$ for the $m\times m$  unitary matrix whose columns are $q_i^t$, $i=1,\ldots , m$. In addition,
write $T$ for the $m\times m$  matrix whose rows are $0$ for $i=1,\ldots , m-s$ and $v_i$ for $i=m-s+1,\ldots , m,$ (the $v$'s are the vectors from the QL factorization of the matrix $M$ (\ref{qrtM})).
Using (\ref{qrtM}), we get
\begin{equation}\label{mut}
M=UT.
\end{equation}
For $i=1,\ldots , m$, define now the polynomials
$$
\mathcal{\tilde R}_i=q_i\begin{pmatrix}\mathcal{R}_1\\ \vdots \\ \mathcal{R}_m \end{pmatrix},\quad \tilde r_i=q_i\begin{pmatrix}r_1\\ \vdots \\ r_m \end{pmatrix},\quad
 \tilde s_i=q_i\begin{pmatrix}s_1\\ \vdots \\ s_m \end{pmatrix}.
$$
The factorization (\ref{fact1}) shows that $\mathcal{\tilde R}_i(x)=\tilde s_i (x)+(i-1)!(x+1)_\alpha \tilde r_i(x)$, $\deg \tilde r_i,\deg \tilde s_i\le m-1$. Since the matrix $U$ is unitary, (\ref{fact2}) and (\ref{mut}) give that
$$
\begin{pmatrix}
  \tilde r_1\\
 \tilde r_2\\
  \vdots \\
 \tilde r_m\\
\end{pmatrix}=TT_1\begin{pmatrix}
  1 \\
  x\\
  \vdots \\
  x^{m-1} \\
\end{pmatrix}.
$$
Since $T_1$ is an nonsingular lower triangular matrix and taking into account (\ref{propv}), we deduce
$$
 \tilde r_i=0,\quad i=1,\ldots , m-s, \quad \deg \tilde r_{m-i+1}= m-l_{m-s+i},\quad i=1,\ldots, s.
$$
On the other hand, since $\deg s_i=m-i$ and taking into account (\ref{en1}) and the definition of $m_m$, we deduce
$\deg \tilde s_i=m-h_i$, $i=1,\ldots , m-s$, except if $h_{m-s}=m$ in which case $\deg \tilde s_i=m-h_i$, $i=1,\ldots , m-s-1$,
$\deg \tilde s_{m-s}=0$. Therefore, if $h_{m-s}<m$ then
$$
\deg \mathcal{\tilde R}_i=\deg \tilde s_i=m-h_i=m_{h_i},\quad i=1,\ldots,m-s,
$$
and if $h_{m-s}=m$ then
$$
\deg \mathcal{\tilde R}_i=\deg \tilde s_i=m-h_i=m_{h_i},\quad i=1,\ldots,m-s-1,\quad \deg \mathcal{\tilde R}_{m-s}=0.
$$
On the other hand, since $\deg \tilde s_i\le m-1$, $1\le i\le m$, $\alpha \ge m$ and $\tilde r_{m-i+1}\not =0$, $i=1,\ldots,s$, we get
$$
\deg \mathcal{\tilde R}_{m-i+1}=\alpha+\deg \tilde r_{m-i+1} =\alpha +m-l_{m-s+i}=n_{l_{m-s+i}},\quad i=1,\ldots,s.
$$
This implies that $\deg \mathcal{\tilde R}_i\not =\deg \mathcal{\tilde R}_j$, $i\not =j$. We can then apply Lemma \ref{lgp1} to get that
$$
\deg \left(\det \left(( \mathcal{\tilde R}_i(x-j))_{i,j=1}^m\right)\right)=\sum _{j=1}^m n_j+\sum _{j=1}^{m-1}m_j-\frac{m(m-1)}{2}.
$$
Since $(\mathcal{\tilde R}_i(x-j))_{i,j=1}^m=U^*\left( \mathcal{R}_i(x-j)\right)_{i,j=1}^m$ and $U$ is unitary, we finally get
$$
\deg \left(\det \left((\mathcal{R}_i(x-j))_{i,j=1}^m\right)\right)=\sum _{j=1}^m n_j+\sum _{j=1}^{m-1}m_j-\frac{m(m-1)}{2}=\awr(M).
$$
We finally prove the Claim. We proceed in three steps.

\bigskip

\noindent
\textsl{First step}. Denote by $e_i$, $i=1,\ldots , m$, the canonical basis in $\RR ^m$, that is $e_{i,j}=\delta_{i,j}$. Then the vectors $e_{h_1},\ldots ,e_{h_{m-s}}, q_{m-s+1}, \ldots , q_m$ are linearly independent.

Consider again the set of numbers $X=\{l_{m-s+1},\ldots , l_m\}$ with $n_{l_k}\not =0$, $k=m-s+1,\ldots, m$ and $l_i<l_{i+1}$,
$Y=\{g_{m-s+1},\ldots , g_m\}$ and $Z=\{h_{1},\ldots , h_{m-s}\}$, with $m_{g_l}=0$, $l=m-s+1,\ldots, m$, $m_{h_l}\not =0$, $l=1,\ldots , m-s$, $g_i<g_{i+1}$ and $h_i<h_{i+1}$.
The definition of the numbers $n_j$, $m_j$ (\ref{nj}), (\ref{mj}) implies that
\begin{equation}\label{notz}
\det (M_{g_i,l_j})_{i,j=m-s+1,\ldots, m}\not =0.
\end{equation}
The factorization (\ref{qrtM}) gives
\begin{equation}\label{4.4}
M_{g_i,l_j}=(q_{m-s+1,g_i},\ldots, q_{m,g_i})\begin{pmatrix}v_{m-s+1,l_j}\\ \vdots \\ v_m,l_j\end{pmatrix},
\end{equation}
or equivalently
$$
(M_{g_i,l_j})_{i,j=m-s+1,\ldots, m}=(q_{i,g_j})_{i,j=m-s+1,\ldots, m}(v_{i,l_j})_{i,j=m-s+1,\ldots, m}.
$$
From (\ref{notz}) we deduce
\begin{align}\label{li1}
\det (q_{i,g_j})_{i,j=m-s+1,\ldots, m}&\not=0,\\ \label{li2}
\det (v_{i,l_j})_{i,j=m-s+1,\ldots, m}&\not=0.
\end{align}
Taking into account (\ref{partt1}), we get from (\ref{li1})
$$
\det (e_{h_1}^t,\ldots ,e_{h_{m-s}}^t, q_{m-s+1}^t, \ldots , q_m^t)=\epsilon\det (q_{i,g_j})_{i,j=m-s+1,\ldots, m}\not =0,
$$
where $\epsilon$ is just a sign: $\epsilon=\pm 1$. That is, the vectors $e_{h_1},\ldots ,e_{h_{m-s}}, q_{m-s+1}, \ldots , q_m$ are linearly independent.

\bigskip

\noindent
\textsl{Second step}. There exist unitary orthogonal vectors $q_1,\ldots, q_{m-s}$ such that:
\begin{enumerate}
\item $q_1,\ldots, q_{m}$ form a unitary orthogonal basis of $\RR ^m$,
\item for  $\mu$, $1\le \mu \le m-s$, and $\nu$, $1\le \nu <h_\mu$, we have that
\begin{equation}\label{en2}
e_\nu \in \langle q_1,\ldots,q_{\mu-1}, q_{m-s+1},\ldots, q_m\rangle ,
\end{equation}
\begin{equation}\label{en3}
e_{h_\mu} \not \in \langle q_1,\ldots, q_{\mu-1}, q_{m-s+1},\ldots, q_m\rangle ,
\end{equation}
\begin{equation}\label{en4}
e_{h_\mu}  \in \langle q_1,\ldots, q_{\mu-1}, q_\mu, q_{m-s+1},\ldots, q_m\rangle ,
\end{equation}
\begin{equation}\label{en5}
q_\mu \in \langle q_1,\ldots, q_{\mu-1}, e_{h_\mu}, q_{m-s+1},\ldots, q_m\rangle .
\end{equation}
\end{enumerate}

Consider first $\mu =1$. We now see that for $1\le \nu <h_1$, $e_\nu \in \langle q_{m-s+1},\ldots, q_m\rangle$ (that is, (\ref{en2}) holds). This is equivalent to prove that
$\rank (e_\nu ^t,q_{m-s+1}^t, \ldots , q_m^t)=s$. Taking into account (\ref{partt1}), it is enough to prove it for the indexes in $Y$ such that $1\leq g_{m-s+\nu}<h_1$. Using (\ref{li1}) and again (\ref{partt1}), it will be enough to prove that for  $l=1,\ldots , m-s$,
$$
\det \begin{pmatrix}0&q_{m-s+1,g_{m-s+1}}&\ldots &q_{m,g_{m-s+1}}\\
\vdots&\vdots&\ddots&\vdots\\
1&q_{m-s+1,g_{m-s+\nu}}&\ldots &q_{m,g_{m-s+\nu}}\\
\vdots&\vdots&\ddots&\vdots\\
0&q_{m-s+1,g_{m}}&\ldots &q_{m,g_{m}}\\
0&q_{m-s+1,h_l}&\ldots &q_{m,h_{l}}\\
\end{pmatrix}=0.
$$
To simplify the writing, we set $W=\{m-s+1,\ldots ,m\}$,
$Y_\nu=Y\setminus \{g_{m-s+\nu}\}$ and $Y^l_\nu =Y_\nu \cup
\{h_l\}$. Hence, we have to prove that
$$
\det (q_{i,j})_{i\in W,j\in Y^l_\nu}=0.
$$
Using (\ref{4.4}), we get
$$
(M_{i,l_j})_{i\in Y^l_\nu ,j\in W}=(q_{i,j})_{i\in W,j\in Y^l_\nu } (v_{i,l_j})_{i,j\in W}.
$$
With the notation of Definition \ref{wrM}, the rows of the matrix $(M_{i,l_j})_{i\in Y^l_\nu,j\in W}$ are (from up to down) $f_i$, $i\in Y^l_\nu$.
Using (\ref{partt2}), we get $f_{h_l}\in \langle f_u: u\in Y, u>h_l \rangle $. But $g_{m-s+\nu}<h_1\le h_l$, hence
$\{u:u\in Y,u>h_l\}\subset Y_\nu$. This gives $f_{h_l}\in \langle f_{u}, u\in Y_\nu \rangle $.
That is, the last row of the matrix $(M_{i,l_j})_{i\in Y^l_\nu ,j\in W}$ is a linear combination of the other rows. This gives $\det (M_{i,l_j})_{i\in Y^l_\nu ,j\in W}=0$. Taking into account (\ref{li2}), we finally get $\det (q_{i,j})_{i\in W,j\in Y^l_\nu }=0$. That is for $1\le \nu <h_1$, $e_\nu \in \langle q_{m-s+1},\ldots, q_m\rangle$.

On the other hand, using the first step we get that $e_{h_1}\not  \in \langle q_{m-s+1},\ldots, q_m\rangle$ (that is, (\ref{en3}) holds).

Using Lemma \ref{cdm}, we get a unitary vector $q_{1}$ satisfying that:
\begin{enumerate}
\item $q_{1},q_{m-s+1},\ldots, q_m$ are unitary orthogonal vectors,
\item $e_{h_1}  \in \langle q_1, q_{m-s+1},\ldots, q_m\rangle $ (therefore we get \eqref{en4}).
\item $q_1\in \langle e_{h_1}, q_{m-s+1},\ldots, q_m\rangle $ (therefore we get \eqref{en5}).
\end{enumerate}
This proves the second step for $\mu =1$.

The vectors $q_2,\ldots , q_{m-s}$ can be constructed proceeding in a similar way.

\noindent
\textsl{Third step}. The unitary orthogonal vectors $q_1,\ldots , q_{m}$ satisfy
that for  $\mu$, $1\le \mu \le m-s$, and $\nu$, $1\le \nu <h_\mu$,
$q_{\mu,h_\mu}\not =0$ and $q_{\mu,\nu}=0$.

Indeed, since the unitary vectors $q_1,\ldots, q_{m}$ are orthogonal, (\ref{en2}) of the second step  implies that $q_{\mu,\nu}=0$. On the other hand,
(\ref{en3}) and (\ref{en4}) of the second step  show that $q_{\mu,h_\mu}\not =0$.

\end{proof}

Some interesting particular cases are the following:
\begin{enumerate}
\item When $M=(M_{i,j})_{i,j=0}^{m-1}$ is the symmetric matrix with entries
$$
M_{i,j}=\begin{cases} a_{i+j},& i+j\le m-1,\\ 0,& i+j> m-1,\end{cases}
$$
($a_{m-1}\neq0$) the discrete Laguerre Sobolev inner product (\ref{idslip}) reduces to the inner product defined by the moment functional
$$
x^{\alpha-m}e^{-x}+\sum_{i=0}^{m-1}a_i\delta_0^{(i)},
$$
introduced by Gr\"unbaum, Haine and Horozov, in \cite{GrHH}. In this case it is easy to see that $\deg \mathcal{R}_j=\alpha +m-j$ and then using Lemma \ref{lgp1}, we have $\deg \Omega =m\alpha $. Our result improves the order of the operator given in  \cite{GrHH} and agrees with the order found by Iliev in \cite{I} for this kind of inner product.
In particular, for $a_1=a_2=\cdots =a_{m-1}=0$, we get orthogonal polynomials with respect to the measure
$$
x^{\alpha -1}e^{-x}+a_0\delta _0,\quad x>0,
$$
and we recover Koekoeks' result (\cite{koekoe}).
\item When $M$ is diagonal, $M=\diagonal (M_0,\ldots , M_{m-1})$, $M_{m-1}\not =0$, we get for $j=1,\ldots , m$, that
$$
\deg \mathcal{R}_j=\begin{cases} \alpha +j-1,& M_{j-1}\not =0,\\ m-j,& M_{j-1}=0.\end{cases}
$$
Using again Lemma \ref{lgp1}, we have
$$
\deg \Omega =s\alpha +(m-s)(m+1)-2\sum_{j:1\le j\le m, M_{j-1}=0}j,\quad s=|\{j:1\le j\le m, M_j\not =0\}|.
$$
For $m=2$, we  recover Koekoeks' and Bavinck results (\cite{KKB}).
\end{enumerate}

\bigskip

We finish this paper illustrating the case when $\alpha=3$, $m=3$ and
$$
M=\begin{pmatrix} 1&1&0\\1&1&0\\0&0&1\end{pmatrix}.
$$
In this case, the polynomials $\mathcal R_j, j=1,2,3$ (see \eqref{iRdSip}) are given by
\begin{eqnarray*}
% \nonumber to remove numbering (before each equation)
  \mathcal R_1(x) &=& -\frac{(x+1)(x+2)(x^2-x-24)}{24}, \\
  \mathcal R_2(x) &=& -\frac{(x+1)(x^3+x^2-14x-48)}{24}, \\
  \mathcal R_3(x) &=& \frac{(x+4)(x^4+x^3+x^2-9x+30)}{60}.
\end{eqnarray*}
The Casorati determinant $\Omega$ (see \eqref{casdet}) is given by
$$
\Omega(x)=-\frac{x^8}{480}+\frac{x^7}{40}-\frac{91x^6}{720}+\frac{3x^5}{20}+\frac{613x^4}{1440}-\frac{71x^3}{40}+\frac{1333x^2}{360}-\frac{22x}{5}-2.
$$
Since for this particular matrix $M$, the bilinear form (\ref{idslip}) is actually an inner product, there exist a sequence $(q_n)_n$ of orthogonal polynomials with respect to it, which can be constructed using (\ref{iquss}).

The solution of \eqref{Pgs} (for $S(x)=1$) is given by
$$
P_{S}(x)=-\frac{x^9}{4320}+\frac{x^8}{480}-\frac{x^7}{144}-\frac{17x^6}{720}+\frac{47x^5}{480}-\frac{253x^4}{1440}+\frac{55x^3}{108}-\frac{289x^2}{360}-\frac{18x}{5}.
$$
Finally, the polynomials $M_j, j=1,2,3$ (see \eqref{emeiexp}) are given by
\begin{eqnarray*}
% \nonumber to remove numbering (before each equation)
  M_1(x) &=& -\frac{11x^6}{120}-\frac{11x^5}{40}+\frac{29x^4}{24}+\frac{87x^3}{8}+\frac{1553x^2}{60}+\frac{152x}{5}+12, \\
  M_2(x) &=& \frac{11x^6}{120}+\frac{11x^5}{40}-\frac{71x^4}{24}-\frac{131x^3}{8}-\frac{512x^2}{15}-\frac{369x}{10}-14, \\
  M_3(x) &=& -2x^3-3x^2-3x-7.
\end{eqnarray*}
The order of the differential operator \eqref{Dq} is 18, which is exactly the value of $2(\awr (M)+1)$ (here $n_1=5, n_2=4, n_3=0$ and $m_1=2, m_2=0$). This order is also given by $2\deg P_{S}(x)$. Observe that the order of $M_1(D_{\alpha})\frac{d}{dx}\mathcal{R}_1(D_{\alpha})$ and $M_2(D_{\alpha})\frac{d}{dx}\mathcal{R}_2(D_{\alpha})$ is 21, but there is no contradiction with the results of this paper since the order of $\sum_{h=1}^3M_h(D_{\alpha})\frac{d}{dx}\mathcal{R}_h(D_{\alpha})$ is less than or equal to 18, as a consequence of canceling the  two first coefficients of $M_1$ and $M_2$, and the first coefficient of $\mathcal{R}_1$ and $\mathcal{R}_2$.

\end{document}